\documentclass[twoside,a4paper]{amsart}
\usepackage{stmaryrd}
\usepackage{ amssymb }
\usepackage{mathabx}
\setcounter{tocdepth}{2}

\usepackage{pdfsync}
\usepackage{xcolor}
\definecolor{webblue}{rgb}{0,.5,0}
\definecolor{webblue}{rgb}{.6,0,0}
\definecolor{RoyalBlue}{cmyk}{1, 0.50, 0, 0}
\usepackage[colorlinks=true, breaklinks=true, urlcolor=webblue, linkcolor=RoyalBlue, citecolor=webblue,backref=page]{hyperref}

\linespread{1.05}        % Palatino needs more leading (space between lines)
\usepackage{courier} % tt
%\normalfont
%\usepackage[T1]{fontenc}

%\usepackage[notcite,notref]{showkeys}
\usepackage{epsfig, graphicx, subfigure}
\usepackage{verbatim, setspace}
\usepackage{amsmath, amssymb}

\def\cal{\mathcal}

\oddsidemargin = 5mm
\evensidemargin = 5mm
\topmargin = -10mm
\textwidth = 155mm
\textheight = 240mm
\usepackage[pdftex]{pict2e}

%sets

\newcommand{\R}     {\mathbb{R}}
\newcommand{\N}     {\mathbb{N}}
\newcommand{\Z}     {\mathbb{Z}}
%convergence

%shortcuts

\newcommand{\dist}{\mathrm{dist}}

\newcommand{\supp}{\mathrm{supp}}

%\renewcommand{\ddd}{\stackrel{<}{\sim}}

%classes

%Riemann surface

%points

\newcommand{\e} {{\boldsymbol e}}

\def\cal{\mathcal}

\def\ge{\geqslant}
\def\le{\leqslant}

\newtheorem{theorem}{Theorem}[section]

\newtheorem{corollary}[theorem]{Corollary}
\newtheorem{lemma}[theorem]{Lemma}

\theoremstyle{remark}

\numberwithin{equation}{section}

\begin{document}

\title[ Wave packet decomposition for Schr\"odinger evolution with rough potential \ldots]{Wave packet decomposition for Schr\"odinger evolution with rough potential and generic value of parameter}
\author{Sergey A. Denisov}

\thanks{
This research was supported by the grant NSF-DMS-2054465  and by the Van Vleck Professorship Research Award. 
}

\address{
\begin{flushleft}
Sergey Denisov: denissov@wisc.edu\\\vspace{0.1cm}
University of Wisconsin--Madison\\  Department of Mathematics\\
480 Lincoln Dr., Madison, WI, 53706,
USA\vspace{0.1cm}\\
\end{flushleft}
}\maketitle

\begin{abstract}
We develop the wave packet decomposition to study the Schr\"odinger evolution with rough potential. As an application, we obtain the improved bound on the wave propagation for the generic value of parameter.
\end{abstract} \vspace{1cm}

\subjclass{}

\keywords{\it Keywords: Schr\"odinger evolution, perturbation theory, wave packet decomposition.}

\maketitle

\setcounter{tocdepth}{3}

\tableofcontents

\section{Introduction.}

That work aims to develop a new technique in perturbation theory for dispersive equations. As a model case, we take Schr\"odinger evolution with time-dependent real-valued potential $V(x,t)$:
\begin{equation}\label{e1}
iu_t(x,t,k)=-k\Delta u(x,t,k)+V(x,t)u(x,t,k), \quad u(x,0,k)=f(x),\quad t\in \R\,\quad \Delta:=\partial^2_{xx}\,,
\end{equation}
where $k\in \R$ is  a real-valued parameter. For a large value of $T\gg 1$, we consider the problem \eqref{e1} either on the real line $x\in \R$ or on the torus $x\in \R/2\pi T\Z$ and make the following assumptions about the real-valued potential $V$ and the initial data $f$ when studying $u$ for $t\in [0,2\pi T]$:\smallskip

\noindent {\bf (A)} if  $x\in \R$, then
\begin{eqnarray}
 \|V(x,t)\|_{L^\infty(\R\times [0,2\pi T])}\lesssim  T^{-\gamma}, \gamma>0\,,
\\
\|f(x)\|_{2}<\infty, \;
\end{eqnarray}

\noindent {\bf (B)} if  $x\in \R/2\pi T\Z$, then
\begin{eqnarray}
 \|V(x,t)\|_{L^\infty(\R/2\pi T\Z\times [0,2\pi T])}\lesssim  T^{-\gamma}, \gamma>0\,,
\\
\|f(x)\|_{2}<\infty\,.
\end{eqnarray}
The behavior of free evolution $e^{i\Delta t}$ is well-understood (the presence of $k$ only scales the time $t$). It is governed by the dispersion relation for which the higher Fourier modes propagate  with higher speed, giving rise to ballistic transport.  
In the current work, we study how the presence of $V$ changes the free evolution $e^{ik\Delta}f$ for the time interval $t\in [0,2\pi T]$.

The solution $u$ in \eqref{e1} is understood as  the solution to the Duhamel integral equation
\begin{equation}\label{duha}
u(\cdot,t,k)=e^{ik\Delta t}f-i\int_0^t e^{ik\Delta (t-\tau)
}V(\cdot,\tau)u(\cdot,\tau,k)d\tau, \quad t\in [0,2\pi T]
\end{equation}
in the class $u\in C([0,2\pi T],L^2)$. Its existence and uniqueness are immediate from the contraction mapping principle. More generally,  for $0\le t_1\le t\le 2\pi T$, the symbol $U(t_1,t,k)$ denotes the operator 
$f\mapsto U(t_1,t,k)f$ where $U(t_1,t,k)f$ solves
\begin{equation}\label{evce1}
i\partial_t U(t_1,t,k)f=-k\Delta U(t_1,t,k)f+V(x,t)U(t_1,t,k)f, \quad U(t_1,t_1,k)f=f\,.
\end{equation}
The propagator $U$ satisfies the standard group property: $U(t_1,t_1,k)=I, U(t_1,t_2,k)U(t_0,t_1,k)=U(t_0,t_2,k)$, and, since $V$ is real-valued, $U$ is a unitary operator in $L^2: \|Uf\|_{2}=\|f\|_2$. To set the stage, we start with an elementary perturbative result (below, the symbol $\|O\|$ indicates the operator norm of the operator $O$ in the space $L^2$).
\begin{lemma} \label{l1} Suppose $0\le t_1\le t\le t_2\le 2\pi T$ and $\|V\|_{\infty}\lesssim T^{-\gamma}$. Then, 
\begin{equation}\label{duh1}
U(t_1,t,k)=e^{ik\Delta (t-t_1)}-i\int_{t_1}^t  e^{ik\Delta (t-\tau)
}V(\cdot,\tau)  e^{ik\Delta (\tau-t_1)
}  d\tau+Err
\end{equation}
and the operator norm of $Err$ allows the estimate
$
 \|Err\|\lesssim  T^{-2\gamma}(t_2-t_1)^2\,.
$
\end{lemma}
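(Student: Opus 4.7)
The plan is to iterate the Duhamel integral equation \eqref{duha} twice and identify the remainder as the error term. Starting from the definition of $U(t_1,t,k)$ as the solution of \eqref{duha} with initial time $t_1$, we have
\[
U(t_1,t,k)f = e^{ik\Delta(t-t_1)}f - i\int_{t_1}^t e^{ik\Delta(t-\tau)}V(\cdot,\tau)U(t_1,\tau,k)f\,d\tau.
\]
The strategy is to substitute the same Duhamel identity for $U(t_1,\tau,k)f$ inside the integrand: replace $U(t_1,\tau,k)f$ by $e^{ik\Delta(\tau-t_1)}f$ (which yields the leading integral term on the right-hand side of \eqref{duh1}) plus the inner Duhamel integral $-i\int_{t_1}^\tau e^{ik\Delta(\tau-s)}V(\cdot,s)U(t_1,s,k)f\,ds$.

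This substitution produces the explicit expression
\[
Err\,f \;=\; -\int_{t_1}^t\!\!\int_{t_1}^\tau e^{ik\Delta(t-\tau)}V(\cdot,\tau)e^{ik\Delta(\tau-s)}V(\cdot,s)U(t_1,s,k)f\,ds\,d\tau
\]
for the remainder. Next, I would bound the operator norm of $Err$ by passing the norm inside the double integral and exploiting three facts: the free propagators $e^{ik\Delta\sigma}$ are unitary on $L^2$ and hence have operator norm one; multiplication by $V(\cdot,\tau)$ has operator norm bounded by $\|V\|_\infty\lesssim T^{-\gamma}$; and the full propagator $U(t_1,s,k)$ is unitary since $V$ is real-valued. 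Multiplying these four norm bounds gives an integrand bounded by a constant times $T^{-2\gamma}$.

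Carrying out the remaining $ds\,d\tau$ integration over the triangle $t_1\le s\le\tau\le t$ yields an area of $(t-t_1)^2/2$, which is trivially bounded by $(t_2-t_1)^2$ under the hypothesis $t\le t_2$. This produces the claimed estimate $\|Err\|\lesssim T^{-2\gamma}(t_2-t_1)^2$. The argument is essentially bookkeeping and presents no genuine obstacle; the only thing to be careful about is that the substitution is indeed well-defined as an $L^2$-valued Bochner integral, which follows from the continuity $u\in C([0,2\pi T],L^2)$ and the boundedness of $V$ already used to solve \eqref{duha} by contraction.
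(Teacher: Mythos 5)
Your proposal is correct and follows essentially the same route as the paper: the paper also iterates the Duhamel equation once, identifies $Err$ as the same double integral $-\int_{t_1}^t e^{ik\Delta(t-\tau_1)}V(\cdot,\tau_1)\int_{t_1}^{\tau_1}e^{ik\Delta(\tau_1-\tau_2)}V(\cdot,\tau_2)U(t_1,\tau_2,k)\,d\tau_2\,d\tau_1$, and bounds it using $\|V\|_{L^\infty}\lesssim T^{-\gamma}$ together with the unitarity of the propagators. No issues.
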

\begin{proof}That follows from the representation
\begin{eqnarray}\label{duz1}
U(t_1,t,k)=e^{ik\Delta (t-t_1)}-i\int_{t_1}^t e^{ik\Delta (t-\tau)
}V(\cdot,\tau)e^{ik\Delta (\tau-t_1)}d\tau+Err\,,\\
Err:=-\int_{t_1}^t e^{ik\Delta (t-\tau_1)
}V(\cdot,\tau_1)\int_{t_1}^{\tau_1} e^{ik\Delta (\tau_1-\tau_2)
}V(\cdot,\tau_2)U(t_1,\tau_2,k)d\tau_2d\tau_1\nonumber
\end{eqnarray}
and two bounds: $\|V\|_{L^\infty}\lesssim  T^{-\gamma}$ and $\|U(t_1,\tau_2,k)\|=1$.
\end{proof}
That lemma has an immediate application. 
\begin{corollary}Suppose $\gamma>1$, then
\begin{equation}\label{asa1}
\|U(0,t,k)-e^{ik\Delta t}\|=O(T^{1-\gamma}), \, t\in [0,2\pi T], \, k\in \R\,.
\end{equation}\label{pl2}
\end{corollary}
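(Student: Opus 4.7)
The plan is to apply Lemma \ref{l1} directly with $t_1=0$ and $t_2=2\pi T$, so that the error term from that lemma already satisfies $\|Err\|\lesssim T^{-2\gamma}(2\pi T)^2\lesssim T^{2-2\gamma}$. It then remains to control the single surviving first-order term on the right-hand side of \eqref{duh1}, namely
\[
\mathcal{I}(t,k):=-i\int_0^t e^{ik\Delta(t-\tau)}V(\cdot,\tau)e^{ik\Delta\tau}\,d\tau,
\]
and to combine the two bounds.

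The first step is to estimate $\|\mathcal{I}(t,k)\|$ by the triangle inequality for the Bochner integral. Using unitarity of $e^{ik\Delta s}$ on $L^2$ for every real $s$ and the pointwise bound $\|V(\cdot,\tau)\|_\infty\lesssim T^{-\gamma}$ (so that multiplication by $V(\cdot,\tau)$ is an operator of norm at most $\lesssim T^{-\gamma}$ on $L^2$), I get
\[
\|\mathcal{I}(t,k)\|\le \int_0^t \|V(\cdot,\tau)\|_\infty\,d\tau\lesssim T^{-\gamma}\cdot 2\pi T\lesssim T^{1-\gamma}
\]
uniformly in $k\in\R$ and $t\in[0,2\pi T]$.

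The second step is to compare the two contributions. By Lemma \ref{l1},
\[
\|U(0,t,k)-e^{ik\Delta t}\|\le \|\mathcal{I}(t,k)\|+\|Err\|\lesssim T^{1-\gamma}+T^{2-2\gamma}.
\]
Since $\gamma>1$ we have $1-\gamma<0$, hence $T^{2-2\gamma}=T^{(1-\gamma)+(1-\gamma)}\le T^{1-\gamma}$ for $T\gg 1$; thus the first summand dominates and we obtain \eqref{asa1}.

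There is essentially no obstacle: the statement is a straightforward corollary of Lemma \ref{l1}, and the only point to verify is that the second-order remainder is absorbed by the first-order term, which is exactly where the hypothesis $\gamma>1$ enters. Note that for $\gamma\le 1$ the perturbative expansion no longer gives a convergent smallness bound on the whole interval $[0,2\pi T]$, and this is precisely the regime where the wave packet decomposition developed later in the paper becomes necessary.
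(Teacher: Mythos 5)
Your proof is correct and is precisely the argument the paper intends: the corollary is stated as an "immediate application" of Lemma \ref{l1}, obtained by taking $t_1=0$, $t_2=2\pi T$, bounding the linear term by $T^{-\gamma}\cdot 2\pi T$ via unitarity, and absorbing the $T^{2-2\gamma}$ error using $\gamma>1$. Nothing is missing.
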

Hence, for $\gamma>1$, the propagated wave is asymptotically close to the free dynamics irrespective of the value of $k$ and of the initial vector $f$. 
In the current paper, we study the problem for generic $k$ and show that the analogous result holds for some $\gamma$ below the elementary threshold $1$. We also give examples which show that \eqref{asa1} cannot hold for $\gamma<1$ for all $k$, in general.

 The Schr\"odinger evolution with smooth $V$  was studied in \cite{bourgain1,erdogan,wang} where the upper and lower estimates for the Sobolev norms of the solution were obtained. 
 In the context of the general evolution equations, these questions were addressed in \cite{d1,d0}. Equations similar to \eqref{e1} appear in the study of  Green's function $G(\cdot,\cdot,k^2)$ of the stationary  two-dimensional Schr\"odinger equation $-\Delta+V$ with slowly-decaying $V(x_1,x_2)$ where $(x_1,x_2)\in \R^2$  and $k\in \R^+$. When written in the polar coordinates $(\theta,r), \theta\in [0,2\pi), r>0$ with $r$ considered as ``time''-variable $t$, the WKB-type correction takes the form close to \eqref{e1} 
  (see \cite{d2} for more detail). The asymptotics of solutions to the $2\times 2$ systems of ODE  for generic value of parameters was studied using the harmonic analysis methods (see, e.g., \cite{ck1,ck2} and references therein). Below, we develop a different approach. It is based on writing the linear in $V$ term in \eqref{duz1} 
 \begin{equation}\label{mt1}
 \int_{t_1}^t e^{ik\Delta (t-\tau)
}V(\cdot,\tau)e^{ik\Delta (\tau-t_1)}d\tau
 \end{equation}
 in the convenient basis of wave packets (see, e.g., \cite{guth}) which allows a detailed and physically appealing treatment of the multi-scattering situation at hand.
 We will mostly focus on problem {\bf (A)}; however, our analysis is valid for the problem {\bf (B)}, too. Denote (see Figure 1)
\begin{equation}\label{lk2}
\Upsilon_T:=\{x\in [-\pi T,\pi T], t\in  [cT,2\pi T], 0<c<2\pi\}\,.
\end{equation}
The following theorem is one application of our perturbative technique.

\begin{theorem}\label{tg1} There is $\gamma_0\in (0,1)$ such that the following statement holds. 
Suppose $V$ satisfies
\begin{eqnarray}\label{usl1}
\|V\|_{L^\infty(\Upsilon^c_T)}\lesssim T^{-\alpha},\alpha>1,\\ \|V\|_{L^\infty(\Upsilon_T)}\lesssim T^{-\gamma}, \gamma>\gamma_0\,,\\
\label{ryr1}
 \supp\, \widehat V(\xi_1,\xi_2)\subset \{\xi: \rho_1<|\xi|<\rho_2\}
 \end{eqnarray} and $\rho_2>\rho_1>0$. Then, for each interval $I\subset \R^+$, there is a positive parameter $\delta(\rho_1,\rho_2,I)$ such that for every function $f$ that satisfies 
\begin{eqnarray}
\|f\|_2=1,\\
\label{asd1}
\|f\|_{L^2(|x|>2\pi T)}=o(1), \, T\to\infty
\end{eqnarray} and 
\begin{equation}\label{asd2}
\supp \,\widehat f \subset (-\delta,\delta)\,,
\end{equation} there exists a set $Nres\subset I$ such that 
$\lim_{T\to\infty} |Res|=0, Res:=I\backslash Nres$ and
\begin{equation}\label{lop2}
\lim_{T\to\infty}  \|u(x,2\pi T,k)-e^{ik \Delta (2\pi T) }f\|_2\to 0
\end{equation}
for $k\in Nres$.
\end{theorem}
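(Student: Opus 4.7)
The plan is to establish the stronger averaged estimate
\[
\int_I\bigl\|\bigl(U(0,2\pi T,k)-e^{ik\Delta 2\pi T}\bigr)f\bigr\|_{L^2_x}^2\,dk=o(1)\quad\text{as}\quad T\to\infty,
\]
from which the theorem follows by Chebyshev's inequality: take $Res:=\{k\in I:\|(U(0,2\pi T,k)-e^{ik\Delta 2\pi T})f\|_{L^2}>\epsilon(T)\}$ for a suitably slow $\epsilon(T)\to 0$. To expose cancellation I iterate the identity \eqref{duz1} to write
\[
U(0,2\pi T,k)-e^{ik\Delta 2\pi T}=\sum_{n=1}^{N}(-i)^n B_n(k)+R_N(k),
\]
where $B_n(k)$ is the $n$-fold Born integral with integrand $e^{ik\Delta(2\pi T-\tau_n)}V(\tau_n)\cdots V(\tau_1)e^{ik\Delta\tau_1}$ on the simplex $0\le\tau_1\le\cdots\le\tau_n\le 2\pi T$. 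Combining the trivial norm bound with Stirling lets one pick $N\sim T^{1-\gamma}$ so that $\|R_N\|$ decays faster than any polynomial in $T$; the task then reduces to controlling $\int_I\|B_n(k)f\|_2^2\,dk$ and summing over $n\le N$.

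For each Born term I would introduce the wave packet frame at the natural Schr\"odinger scale for evolution time $2\pi T$, namely tubes in phase space of spatial width $\ell\sim T^{1/2}$ and frequency width $\ell^{-1}\sim T^{-1/2}$. Because $\widehat f\subset(-\delta,\delta)$, only packets $\phi_{(x_0,\xi_0)}$ with $|\xi_0|\lesssim\delta$ enter the decomposition of $f$, and by \eqref{asd1} essentially only those with $|x_0|\lesssim 2\pi T$. Under free evolution, $e^{ik\Delta t}\phi_{(x_0,\xi_0)}$ is, up to phase and an $L^2$-small spreading error, the packet centered at $(x_0+2k\xi_0 t,\xi_0)$; multiplication by $V$, whose Fourier support sits in the annulus $\rho_1<|\xi|<\rho_2$, shifts a packet's central frequency to $\xi_0+\eta$ with $\eta\in\supp\widehat V(\cdot,\tau)$ while fixing its spatial center. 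Consequently $B_n(k)f$ decomposes as a sum over \emph{scattering histories} $\sigma=(x_0,\xi_0;\tau_1,\eta_1;\ldots;\tau_n,\eta_n)$, each producing a terminal packet at the endpoint of a piecewise-linear trajectory with successive velocities $2k(\xi_0+\eta_1+\cdots+\eta_j)$ and carrying a phase of the form $e^{ik\Phi(\sigma)}$. The hypothesis \eqref{usl1} with $\alpha>1$ confines the effective interaction to $\Upsilon_T$, so each scattering point $(x_j,\tau_j)$ must lie inside this box, which constrains the admissible histories to a submanifold of comparatively small measure.

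Cancellation is extracted by averaging in $k$. For fixed $\sigma$ the phase slope $\Phi(\sigma)=-\sum_{j=0}^n(\xi_0+\eta_1+\cdots+\eta_j)^2(\tau_{j+1}-\tau_j)$, plus lower-order terms involving $x_0$ and the $\eta_j$'s, is, thanks to the annular support $\rho_1<|\eta_j|<\rho_2$, bounded below quantitatively except on a small set of \emph{resonant} histories; integration by parts in $k$ against $e^{ik\Phi(\sigma)}$ then produces decay on the nonresonant part. Expanding $\int_I\|B_n(k)f\|_2^2\,dk$ as a double sum over history pairs $(\sigma,\sigma')$, the near-orthogonality of wave packets concentrates the contribution on pairs with matching terminal packets; the diagonal admits the trivial bound while the off-diagonal gains a quantitative power of $T$ from the $k$-integration. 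The main obstacle, and the heart of the proof, is combinatorial: the number of histories of length $n$ grows like $T^{Cn}$ for some $C>0$, so the phase-cancellation gain per scattering must dominate this growth, and the threshold $\gamma_0\in(0,1)$ of the theorem is precisely what emerges from balancing these two competing exponential rates. The final exceptional set $Res$ consists of those $k$ at which many histories simultaneously become resonant; one shows $|Res|=o(1)$ by a Vitali-type covering argument applied to the level sets of $\Phi(\sigma)$.
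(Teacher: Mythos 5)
Your top-level reduction (prove an averaged-in-$k$ estimate and conclude by Chebyshev) and your physical picture of packets changing velocity by a frequency in the annulus $\rho_1<|\eta|<\rho_2$ at each interaction are both consistent with the paper. But the global architecture of your argument is different from the paper's in a way that creates a genuine gap. You expand the full Born series on $[0,2\pi T]$ up to order $N\sim T^{1-\gamma}$ and propose to control every multilinear term $B_n(k)$ by phase cancellation over pairs of length-$n$ scattering histories. The number of such histories grows super-polynomially in $T$ once $n$ is large, while integration by parts in $k$ over the fixed bounded interval $I$ can only gain a bounded power of $T$ per nonresonant pair (the phase derivative is $O(T)$), and the resonant set of history pairs --- where the relative phase is essentially constant in $k$ --- is not small without a detailed geometric count. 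Your proposal asserts that ``the phase-cancellation gain per scattering must dominate this growth'' and that $\gamma_0$ ``emerges from balancing these two rates,'' but no mechanism is given for the gain, and it is precisely here that all the work lies: in the paper, obtaining even a $T^{-1/832}$ improvement over the trivial bound for the \emph{single}-collision operator $Q$ of \eqref{coll} (Theorem \ref{tg2}, estimate \eqref{mesy1}) requires the sparsification of $V$, the global-resonance counting, the No Lattice Lemma, and the splitting of $V$ into $\lambda$-low-frequency and low-complexity pieces. Nothing in your sketch explains how an analogous --- indeed vastly stronger --- gain would be extracted uniformly for all $n\le T^{1-\gamma}$ and then summed.

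The paper avoids this multi-collision combinatorics altogether. It partitions $[0,2\pi T]$ into $N$ short intervals, applies the two-term Duhamel expansion \eqref{duz1} on each, bounds the quadratic-and-higher remainder on each short interval crudely by $(T/N)^2T^{-2\gamma}$ via Lemma \ref{l1}, and feeds only the linear (one-collision) term into the averaged wave-packet estimate; iterating the resulting recursion for $\varepsilon_j$ and optimizing $N$ yields $\gamma_0=1-\tfrac{1}{2496}$. If you want to salvage your outline, the realistic route is to adopt this short-time iteration so that only the $n=1$ term ever needs a refined bound, and then to supply the actual content of that bound: the decomposition into characteristic cubes, the distinction between global resonances (the $k$-independent condition \eqref{global}) and local resonances (frequency matching as in \eqref{pol1}), and the counting lemmas that make the resonant contribution summable. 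As written, the proposal identifies the correct difficulty but does not resolve it.
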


\begin{picture}(200,140)
\put(100,50){\textcolor{yellow}{\rule{220\unitlength}{50\unitlength}}}
\put(160,5){Figure 1}
\put(20,25){\line(1,0){350}}
\put(20,75){\vector(1,0){350}}
\put(380,65){$t$}
\put(325,65){$2\pi T$}
\put(20,125){\line(1,0){350}}
\put(20,15){\vector(0,1){120}}
\put(-10,22){$-2\pi T$}
\put(-3,122){$2\pi T$}
\put(10,135){$x$}
\put(320,15){\line(0,1){120}}
\put(100,50){\line(0,1){50}}
\put(100,100){\line(1,0){220}}
\put(100,50){\line(1,0){220}}
\put(180,60){$\Upsilon_T$}
\end{picture}

\noindent {\bf Remark.} In contrast to Corollary \ref{pl2}, the theorem says that the propagation is asymptotically free even for some $\gamma<1$ if we consider a generic value of $k$. It is crucial that we can handle essentially arbitrary initial data $f$.  In fact, for $f$ that is well-localized on the frequency side, a simple perturbative argument can be used to get an analogous result. The conditions \eqref{asd1}, \eqref{asd2}, and \eqref{usl1}-\eqref{ryr1} on the $f$ and $V$  are included to guarantee that the bulk of the wave $u$ given by $f$ hits $\Upsilon_T$.
On the other hand, assumption \eqref{asd2} is essential for the statement to hold and cannot be dropped as will be illustrated in Section~\ref{s6} by  example.

\smallskip

\noindent {\bf Remark.} For given $f$, such set $Nres$ (which can depend on $f$) will be called the set of {\it non-resonant} parameters within the interval $I$. Later, for every $\gamma<1$, we will present $f$ and potential $V$  so that the resonant set $Res$ is nonempty. \smallskip

\noindent {\bf Remark.} Simple modification of Lemma~\ref{l1} shows that the potential $V$ from Theorem \ref{tg1}  is negligible outside $\Upsilon_T$ due to \eqref{usl1}. Inside $\Upsilon_T$ it satisfies the ``weak decay condition'' if $\gamma<1$, and it oscillates on scale $\sim 1$ there due to \eqref{ryr1}. For the problem we consider, the assumption \eqref{ryr1} is actually necessary since dropping it can introduce a well-known WKB-type correction in the dynamics and \eqref{lop2} would fail.

\begin{lemma} \label{lk5} Let $\widehat\phi(\xi_1,\xi_2)$ be a smooth centrally symmetric real-valued bump function supported on $B_1(0)$ in $\R^2$. Denote $\kappa:=\sqrt T$ and let $\phi_\kappa(x,t)=\phi(x/\kappa,t/\kappa)$. Then, the function
\begin{eqnarray}\nonumber
V=
\sum_{\stackrel{|2\pi n\kappa|<T/4,}{|2\pi m \kappa-\pi T|<T/20}} T^{-\gamma}c_{n,m}\cos(x\lambda_{n,m}+t\mu_{n,m})\phi_\kappa(x-2\pi n\kappa,t-2\pi m\kappa),\\ (n,m)\in \mathbb{Z}^2,\, c_{n,m}\in [-1,1], \gamma>\gamma_0, \lambda_{n,m}^2+\mu_{n,m}^2\sim 1\label{hsh1}
\end{eqnarray}
 satisfies conditions of the previous theorem.
 \end{lemma}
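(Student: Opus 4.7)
The plan is to verify, in turn, the three hypotheses of Theorem~\ref{tg1}: that $\supp V$ lies inside $\Upsilon_T$ (which handles the bound on $\Upsilon_T^c$ trivially), that the $L^\infty$ norm of $V$ on $\Upsilon_T$ is $O(T^{-\gamma})$, and that $\widehat V$ is supported in an annulus $\rho_1<|\xi|<\rho_2$.

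First I would track supports. The bump $\phi_\kappa$ is supported in the ball of radius $\kappa=\sqrt T$ about the origin, so the $(n,m)$-th summand is supported in a $\kappa$-ball about $(2\pi n\kappa,2\pi m\kappa)$. The indexing constraints $|2\pi n\kappa|<T/4$ and $|2\pi m\kappa-\pi T|<T/20$ put these centers inside $(-T/4,T/4)\times(\pi T-T/20,\pi T+T/20)$. Since $\kappa=\sqrt T\ll T$, for large $T$ each summand is supported in $(-T/4-\sqrt T,T/4+\sqrt T)\times(\pi T-T/20-\sqrt T,\pi T+T/20+\sqrt T)$, which lies inside $[-\pi T,\pi T]\times[cT,2\pi T]$ for any fixed $c<\pi-1/20$. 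Hence $\supp V\subset \Upsilon_T$, so \eqref{usl1} is satisfied vacuously (with any $\alpha$).

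Next, for the $L^\infty$ bound on $\Upsilon_T$, I would exploit the bounded overlap of the packets. The centers $(2\pi n\kappa,2\pi m\kappa)$ form a lattice of spacing $2\pi\kappa$, while each packet has radius $\kappa$. Consequently, at each point $(x,t)$ at most $O(1)$ summands are nonzero. Since each satisfies $|\cos(x\lambda_{n,m}+t\mu_{n,m})\phi_\kappa(\cdot)|\le\|\phi\|_\infty$ and $|c_{n,m}|\le 1$, we get $\|V\|_\infty\lesssim T^{-\gamma}\|\phi\|_\infty$, which yields the required bound.

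Finally, for the Fourier support, I would use translation-invariance of $\supp\widehat{\,\cdot\,}$ and the convolution/product identity. Writing the $(n,m)$-th summand as the translate by $(2\pi n\kappa,2\pi m\kappa)$ of $\cos(x\lambda_{n,m}+t\mu_{n,m})\phi_\kappa(x,t)$ (up to a phase in the cosine that does not change the magnitude of the Fourier support), its Fourier transform is $\tfrac12[\widehat{\phi_\kappa}(\xi-\ell_{n,m})+\widehat{\phi_\kappa}(\xi+\ell_{n,m})]$ multiplied by a unimodular phase, where $\ell_{n,m}=(\lambda_{n,m},\mu_{n,m})$. Since $\widehat\phi$ is supported in $B_1(0)$, one has $\supp\widehat{\phi_\kappa}\subset B_{1/\kappa}(0)=B_{T^{-1/2}}(0)$, so each summand's Fourier transform is supported in $B_{T^{-1/2}}(\ell_{n,m})\cup B_{T^{-1/2}}(-\ell_{n,m})$. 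By assumption $|\ell_{n,m}|\sim 1$, i.e.\ there exist $0<c_1<c_2$ with $c_1\le |\ell_{n,m}|\le c_2$ uniformly in $(n,m)$, so for $T$ large every such ball lies in the annulus $\{c_1/2<|\xi|<2c_2\}$. Taking $\rho_1=c_1/2$, $\rho_2=2c_2$ and using that $\supp\widehat V$ is contained in the union over $(n,m)$ gives \eqref{ryr1}.

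The only delicate point is the uniformity in the Fourier support step, which is guaranteed precisely by the assumption $\lambda_{n,m}^2+\mu_{n,m}^2\sim 1$; everything else reduces to a direct comparison of length scales $\kappa=\sqrt T$ versus $T$. This completes the verification.
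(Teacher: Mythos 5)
Your argument for the Fourier--support condition \eqref{ryr1} is correct and is the same as the paper's: $\widehat{\phi_\kappa}(\xi)=\kappa^2\widehat\phi(\kappa\xi)$ lives in $B_{1/\kappa}(0)$, the cosine modulation translates this to $\pm(\lambda_{n,m},\mu_{n,m})$, physical translation leaves the Fourier support unchanged, and $\lambda_{n,m}^2+\mu_{n,m}^2\sim 1$ places everything in a fixed annulus. However, your first two steps rest on a misreading of the hypothesis that makes the proof internally inconsistent. The lemma assumes that $\widehat\phi$ is supported on $B_1(0)$, not $\phi$. By the uncertainty principle a nonzero function cannot have both itself and its Fourier transform compactly supported, and indeed you invoke compact support of $\phi$ in steps one and two and compact support of $\widehat\phi$ in step three. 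Since $\phi$ is only the inverse Fourier transform of a smooth compactly supported function, it is Schwartz but \emph{not} compactly supported: every summand is supported on all of $\R^2$, so the claim $\supp V\subset\Upsilon_T$ is false, \eqref{usl1} is not vacuous, and the ``bounded overlap'' count of nonzero summands at a given point does not apply (all summands are nonzero at every point).

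The correct route, which is what the paper does, is to replace compact support by rapid decay: since $\phi\in\mathbb{S}(\R^2)$ one has
\begin{equation*}
|V(x,t)|\le C_b\,T^{-\gamma}\sum_{|2\pi n\kappa|<T/4,\ |2\pi m\kappa-\pi T|<T/20}\Bigl(1+|x\kappa^{-1}-2\pi n|+|t\kappa^{-1}-2\pi m|\Bigr)^{-b}
\end{equation*}
for every $b>0$. Summing the lattice tails gives $\|V\|_{L^\infty}\lesssim T^{-\gamma}$ everywhere (this repairs your step two: the conclusion was right, the mechanism was not), and for $(x,t)\in\Upsilon_T^c$ every index in the sum satisfies $|x\kappa^{-1}-2\pi n|+|t\kappa^{-1}-2\pi m|\gtrsim\kappa$ provided the constant $c$ in the definition of $\Upsilon_T$ is taken small enough, which yields $\|V\|_{L^\infty(\Upsilon_T^c)}\le C_bT^{-b}$ for every $b$ and hence \eqref{usl1} with any $\alpha>1$. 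You should also note, as the paper does, that the central symmetry and real-valuedness of $\widehat\phi$ make $\phi$, and hence $V$, real-valued, which Theorem \ref{tg1} requires.
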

 
 {\it Proof.} Indeed, $\widehat \phi\in \mathbb{S}(\R^2)$ so $ \phi\in \mathbb{S}(\R^2)$ and $\phi_\kappa$ is real-valued. Then, the Fourier transform of $\cos(x\lambda_{n,m}+t\mu_{m,n})\phi_\kappa(x-2\pi n\kappa,t-2\pi m\kappa)$ is supported on two balls centered at $(\pm \lambda_{n,m},\pm \mu_{n,m})$ with radii $\kappa^{-1}$. Moreover, 
 \[
 |V|\le C_b T^{-\gamma}\sum_{|2\pi n\kappa|<T/4,|2\pi m\kappa-\pi T|<T/20} \Bigl(1+|x\kappa^{-1}-2\pi n|+|t\kappa^{-1}-2\pi m| \Bigr)^{-b}
 \]
with any positive $b$. Hence, taking $c$ small enough in the definition of $\Upsilon_T$, we get $|V|\lesssim T^{-\gamma}$ on $\Upsilon_T$ and $|V|\le C_b T^{-b}$ with any positive $b$ outside of $\Upsilon_T$. The condition $\lambda_{n,m}^2+\mu_{n,m}^2\sim 1$ guarantees that the function $\widehat V$ is supported on the annulus $\rho_1<|\xi|<\rho_2$ with some positive $\rho_1$ and $\rho_2$.\qed\smallskip

The random potentials $V$ that model Anderson localization/delocalization phenomenon  exhibit strong oscillation (see, e.g., \cite{bourgain,den,yao,safronov}) similar to the one considered in Theorem \ref{tg1}. \smallskip

The structure of the paper is as follows. The second section contains the formulation of our central perturbation result, we develop the suitable wave packet decomposition there. The third and fourth sections frame the problem in this new setup, they contain the main arguments and the proof of Theorem~\ref{tg1}. Section five provides some examples of resonance formation. In the appendix, we collect some auxiliary results.\bigskip

%{\bf Remark.} Our analysis is applicable to equations more general than \eqref{e1}, e.g., 
%\begin{equation}\label{e2}
%iu_t(x,t,k)=-kp(i\partial_x)u(x,t,k)+\mu(k)V(x,t)u(x,t,k), \quad u(x,0)=f(x)\,,x\in \R,\,t\in \R
%\end{equation}
%where $p$ is smooth, real-valued, and $\mu\in L^1_{\rm loc}(\R)$. Also, we consider the 1-d case only for the sake of simplicity.\bigskip

{\bf Some notation:}\smallskip

$\bullet\quad$ If $x\in \R^d$ and $r>0$, then $B_r(x)$ stands for the closed ball of radius $r$ around the point $x$. Given a set $E\subset \R^d$, the symbol $E^c$ denotes its complement $E^c=\R^d\backslash E$.

$\bullet\quad$ The Fourier transform in $\R^d$ of function $f$ is denoted by 
\[
\mathcal{F}f=\widehat f=(2\pi)^{-\frac d2}\int_{\R^d} f e^{- i \langle x,\xi\rangle }dx\,.
\]
We will use both symbols\, $\widehat\cdot$\, and $\mathcal{F}$ in the text,  depending on what is more convenient.

$\bullet\quad$ Suppose $f_T\in L^2(\R), \|f_T\|_{L^2(\R)}=1$ and $T\gg 1$ is a large parameter. We will say that $f_T$ is concentrated around $a_T$ at scale $\ell_T$ ($a_T\in \R, \ell_T>0$) if
\[
\lim_{q\to +\infty} \limsup_{T\to \infty}\int_{|x-a_T|>q\ell_T}|f_T|^2dx=0\,.
\]
%and there are $T$-independent parameters $q_1,q_2: 0<q_1<q_2$ so that 
%\[
%\liminf_{T\to\infty}\int_{q_1\ell_T<|x-a_T|<q_2\ell_T}|f_T|^2dx >0\,.
%\]

$\bullet\quad$ Let $\kappa:=T^{\frac 12}$. We  introduce the lattice $2\pi \kappa \Z\times 2\pi \kappa  \Z$ and call the cubes $\{B_{p,q}\}=[2\pi \kappa  p, 2\pi \kappa  (p+1)]\times [2\pi \kappa  q, 2\pi \kappa  (q+1)], p,q\in \Z$ the characteristic cubes in $(x,t)\in \R^2$. We will say that a point has ``unit coordinates'' $(p,q)$ if its coordinate $x=2\pi \kappa p$ and its coordinate $t=2\pi q\kappa$.

$\bullet\quad$ If $B$ is a cube on the plane (or an interval on $\R$), then $c_B$ denotes its center and $\alpha B$ denotes the $\alpha$-dilation of $B$  around $c_B, \alpha>0$.

$\bullet\quad$ Given an interval $I=[k_1,k_2]\subset \R^+$, the symbol $I^{-1}$ denotes $\{\eta=k^{-1}\,|\,k\in I\}=[k_2^{-1},k_1^{-1}]$.

$\bullet\quad$ Symbol $C^\infty_c(\R^d)$ denotes the set of smooth compactly supported functions on $\R^d$ and $\mathbb{S}(\R^d)$ stands for the Schwartz class of functions.

%$\bullet\quad$ Given any real number $x$, we write $\langle x\rangle=(x^2+1)^\frac 12$.

$\bullet\quad$ We will use the following notation standard in the modern harmonic analysis, i.e., given two positive quantities $A$ and $B$ and a large parameter $T$, we write $A\lessapprox B$ if there is $C_\epsilon$ such that
$
A\le C_\epsilon T^\epsilon B
$
for all $T>1$ and all $\epsilon>0$. Given a quantity $f$, the symbol $O(f)$ will indicate another quantity that satisfies $|O(f)|\le C|f|$ with a constant $C$.

$\bullet\quad$ For a function $g\in L^2(\R)$ and a measurable set $E\subset \R$, the symbol $P_{E}g$ will indicate the Fourier orthogonal projection to the set $E$, i.e., $P_Eg=\mathcal{F}^{-1}(\mathcal{F} g \cdot \chi_E)$.

$\bullet\quad$  Symbol $\kappa$ is used for $T^{\frac 12}$ and $\eta$ for $k^{-1}$. In many estimates below, the letter $R$ will indicate a positive quantity that satisfies $R\lessapprox 1$ where $\kappa$ and $T$ are  large parameters.
\bigskip

\section{The main estimate and the wave packet decomposition.}

We start with a result that illustrates the Fourier restriction phenomenon of the Duhamel operator $\int_0^Te^{ik(T-t)\Delta}dt$.
\begin{lemma}Suppose $g(x,t)\in \mathbb{S}(\R^2)$. Then,
\[
\int_\R\left\|\int_0^{2\pi T}e^{ik\Delta (2\pi T-t)}\partial_xg(\cdot,t)dt\right\|_{2}^2 dk\lesssim \|g\|^2_{L^2(\R^2)}
\]  
and
\[
\int_\R\left\|P_{|\xi|>\delta}\int_0^{2\pi T}e^{ik\Delta (2\pi T-t)}g(\cdot,t)dt\right\|_{2}^2 dk\lesssim \delta^{-2} \|g\|^2_{L^2(\R^2)}
\]
for all $\delta>0$.\label{lbbb}
\end{lemma}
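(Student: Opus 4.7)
The plan is to reduce both estimates to the same computation on the Fourier side. I first apply the Plancherel identity in the spatial variable $x$: since $e^{ik\Delta(2\pi T-t)}$ becomes multiplication by $e^{-ik\xi^2(2\pi T-t)}$ and $\partial_x$ becomes multiplication by $i\xi$, I compute
\[
\mathcal{F}_x\!\left[\int_0^{2\pi T} e^{ik\Delta(2\pi T-t)}\partial_x g(\cdot,t)\,dt\right]\!(\xi)= i\xi\, e^{-ik\xi^2\cdot 2\pi T}\int_0^{2\pi T} e^{ik\xi^2 t}\widehat g(\xi,t)\,dt,
\]
so that the norm squared in $x$ equals $\int_\R \xi^2\bigl|\int_0^{2\pi T} e^{ik\xi^2 t}\widehat g(\xi,t)\,dt\bigr|^2 d\xi$.

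Next I integrate in $k$ and swap orders. For each fixed $\xi\neq 0$, I change variables $\omega=k\xi^2$ so $dk=\xi^{-2}d\omega$. The inner time integral then becomes the Fourier transform in $t$ of $\widehat g(\xi,\cdot)\chi_{[0,2\pi T]}$ evaluated at $\omega$. Applying Plancherel in $t$ gives
\[
\int_\R\left|\int_0^{2\pi T} e^{ik\xi^2 t}\widehat g(\xi,t)\,dt\right|^2 dk
= \frac{2\pi}{\xi^2}\int_0^{2\pi T}|\widehat g(\xi,t)|^2\,dt.
\]
The factor $\xi^2$ coming from $\partial_x$ cancels the $\xi^{-2}$ arising from the change of variables, so Fubini together with Plancherel in $x$ again yields
\[
\int_\R\left\|\int_0^{2\pi T}e^{ik\Delta(2\pi T-t)}\partial_x g(\cdot,t)\,dt\right\|_2^2 dk = 2\pi\int_0^{2\pi T}\!\!\int_\R |\widehat g(\xi,t)|^2\,d\xi\,dt \lesssim \|g\|_{L^2(\R^2)}^2,
\]
proving the first bound.

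For the second inequality I run the same argument without the $\partial_x$ factor but with the Fourier cutoff $\chi_{|\xi|>\delta}$ inserted. The absence of the $\xi^2$ weight is compensated by restricting the outer integral to $|\xi|>\delta$, where $\xi^{-2}\le \delta^{-2}$. This gives
\[
\int_\R\left\|P_{|\xi|>\delta}\!\int_0^{2\pi T}e^{ik\Delta(2\pi T-t)}g(\cdot,t)\,dt\right\|_2^2 dk \le \delta^{-2}\cdot 2\pi\|g\|_{L^2(\R^2)}^2,
\]
as required.

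I do not anticipate any real obstacle: the smoothness and decay of $g\in\mathbb{S}(\R^2)$ make every change of variables and application of Plancherel fully justified, and the only algebraic point worth checking carefully is that the $\xi$-factor from $\partial_x$ precisely compensates the Jacobian $\xi^{-2}$ from rescaling $k\mapsto k\xi^2$. This is the mechanism that makes the first estimate dimensionless, and the replacement of that cancellation by the crude bound $\xi^{-2}\le\delta^{-2}$ on $\{|\xi|>\delta\}$ is what forces the $\delta^{-2}$ loss in the second one.
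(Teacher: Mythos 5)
Your argument is correct and is essentially the paper's own proof: both pass to the Fourier transform in $x$, recognize the $k$-integral as an integral over the parabola $\omega=k\xi^2$ with Jacobian $\xi^{-2}$, and apply Plancherel in $t$, with the $\xi^2$ from $\partial_x$ (respectively the bound $\xi^{-2}\le\delta^{-2}$ on $\{|\xi|>\delta\}$) absorbing the Jacobian. The only cosmetic difference is that the paper deduces the second estimate from the first rather than rerunning the computation with the cutoff, which amounts to the same thing.
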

\begin{proof}
Taking the Fourier transform in variable $x$, we get
\[
\mathcal{F}\left(\int_0^{2\pi T} e^{-ik\Delta t}\partial_xg(\cdot,t)dt\right)=C\xi\int_0^{2\pi T}  e^{ikt\xi^2} (\mathcal{F} g)(\xi,t)dt=C\xi\mathcal{F}( \chi_{0<t<2\pi T}\cdot g)(\xi,-k\xi^2),
\]
where  $\mathcal{F}$ in the right-hand side is a two-dimensional Fourier transform, which is  restricted to a $k$-dependent parabola. The simple change of variables and Plancherel theorem gives us
\[
\int_\R\int_\R |\xi\mathcal{F}( \chi_{0<t<2\pi T}\cdot g)(\xi,-k\xi^2)|^2d\xi dk\lesssim \int_{\R}\int_{0<t<2\pi T} |g(x,t)|^2dx dt\le \|g\|_2^2\,.
\]
The second bound in the lemma follows immediately from the first.
\end{proof}
In our paper, we will be estimating expression (the one-collision operator)
\[
\int_0^{2\pi T} e^{ik\Delta (2\pi T-t)}V(\cdot,t)e^{ik\Delta t}fdt\,,
\]
which corresponds to the linear in $V$ term in \eqref{duh1}. Notice that $V(\cdot,t)e^{ikt\Delta}f$ depends on $k$ and the previous lemma is not applicable. However, for $f$ localized on low frequencies on the Fourier side, the dependence on $k$ is so weak that a variant of that lemma can be used to obtain an estimate better than the general bound we get. To treat the general $f$, we will apply the different technique standard in the harmonic analysis. That approach uses the decomposition of $f$ into a different ``basis'' of the so-called wave packets and each element of such basis has a particular localization both on the physical and on the Fourier side. Such a decomposition is a standard tool in modern harmonic analysis (see, e.g., \cite{guth} for one application).

\subsection{$k$-dependent wave packet decomposition.} 

In the proof of Theorem \ref{tg1}, we can take for $I$ the finite union of dyadic intervals $[2^{j},2^{j+1}), j\in \Z$ and, therefore, it is enough to prove the claim for dyadic intervals only. Now, notice that given $k\in [2^{j},2^{j+1})$, we can rescale the space variable $\widetilde x=2^{(j-1)/2} x$
to reduce the problem to $k\in [2,4)$. The support of $\widehat V$ (parameters $\rho_1$ and $\rho_2$) and the domain $\Upsilon_T$  will change, too, but the proof can be easily adapted.
Let $I=[2,4]$. Take a nonnegative function $h\in C^\infty_c(\R)$ supported on $[0,4\pi]$ such that the partition of unity 
\begin{equation}\label{loy1}
1=\sum_{n\in \Z}h^2(s-2\pi n)
\end{equation}
holds for $s\in \R$. Since $k\in I$, we get $k\ge 2$. Then, for every $f\in \mathbb{S}(\R)$ and every $s\in [2\pi n,2\pi(n+k)]$, we can write
\[
f(s)h(s-2\pi n)=(2\pi k)^{-1}\sum_{\ell\in \Z}  \left(\int_{2\pi n}^{2\pi(n+k)}f(\tau)h(\tau-2\pi n)e^{- i\ell k^{-1} (\tau-2\pi n)} d\tau\right) e^{ i\ell k^{-1} (s-2\pi n)}, 
\]
and,  if we introduce an auxiliary variable $\eta=k^{-1}$,
\begin{eqnarray*}
f(s)\stackrel{\eqref{loy1}}{=}\sum_{n,\ell\in \Z^2} \widetilde f_{n,\ell}(\eta) e^{ i\ell \eta s}h(s-2\pi n), \quad \widetilde f_{n,\ell}(\eta):=\eta(2\pi)^{-1}\int_{2\pi n}^{2\pi(n+k)}f(\tau)h(\tau-2\pi n)e^{- i\ell \eta \tau} d\tau\\\stackrel{k>2,\,\supp\, h\,\subset [0,4\pi]}{=}\eta(2\pi)^{-1}\int_{2\pi n}^{2\pi (n+2)}f(\tau)h(\tau-2\pi n)e^{- i\ell \eta \tau} d\tau\,.
\end{eqnarray*}
We dilate by $\kappa$ to write
\begin{equation}\label{wpack1}
f(x)=\sum_{n,\ell\in \Z^2} \omega_{n,\ell}(x,k)f_{n,\ell}(\eta), \, 
\omega_{n,\ell}(x,k)= h_n(x)e^{ i\ell \eta \kappa^{-1}x}\,,
\end{equation}
where
\begin{equation}\label{pico1}
h_n(x):=\kappa^{-\frac 12}h\left(\frac{x-2\pi \kappa n}{\kappa}\right), \, 
f_{n,\ell}(\eta):=\eta(2\pi)^{-1}\int_{2\pi n\kappa}^{2\pi (n+2)\kappa}f(x)h_n(x)e^{- i\ell \eta\kappa^{-1} x} dx
\end{equation}
and $f_{n,\ell}(\eta)$ can be written as follows
\begin{eqnarray}\nonumber
f_{n,\ell}(\eta)=e^{- 2\pi i\eta n\ell}f^*_{n,\ell}(\eta), \,\\ f^*_{n,\ell}(\eta):=\frac{\eta}{ 2\pi \sqrt\kappa}\int_{0}^{4\pi \kappa}f(s+2\pi n\kappa)h(s\kappa^{-1})e^{- i\ell \eta s\kappa^{-1}} ds\,. \label{lil1}
\end{eqnarray}
By Plancherel theorem, 
\[
\sum_{\ell\in \Z} |f_{n,\ell}^*|^2=\frac{\eta}{2\pi} \int_\R | f(s)h(s\kappa^{-1}-2\pi n)|^2ds
\]
and
\begin{equation}\label{ss1}
\sum_{n,\ell\in \Z^2} |f_{n,\ell}^*|^2\sim  \|f\|_2^2\,,
\end{equation}
because $\sum_{n\in \Z}|h(x-2\pi n)|^2=1$ and $\eta \in I^{-1}$. By the standard approximation argument, we can extend \eqref{ss1} from $f\in \mathbb{S}(\R)$ to $f\in L^2(\R)$.\smallskip

Recall that $\eta=k^{-1}$. Using the known evolution of $\omega_{0,0}$ for $t\in [0,2\pi T]$ (check \eqref{ffa1}), we can use the modulation property $(a)$ from Appendix (check \eqref{isad1}) to get
\begin{equation}\label{klop1}
e^{ik\Delta t}\omega_{n,\ell}=\kappa^{-\frac 12}\Omega_{T_{n,\ell}}(x,t,k)e^{ i\eta(\alpha(T^\rightarrow) x-\alpha^2(T^\rightarrow) t)},\quad  \alpha(T^\rightarrow):=\ell/\kappa\,,
\end{equation}
where the function 
\[\Omega_{T_{n,\ell}}(x,t,k)=\cal U((x-2\pi(n+1)\kappa-2t\alpha(T^\rightarrow))/\kappa,tk/\kappa^2)\] oscillates  slowly  in  $x\in \R$ and $t\in [-T,T]$, i.e.,
\[
\left|\frac{\partial^j \Omega_{T_{n,\ell}}(x,t,k)}{\partial x^j}\right|\le_j \kappa^{-j}, \, \left|\frac{\partial^j \Omega_{T_{n,\ell}}(x,t,k)}{\partial t^j}\right|\le_j \kappa^{-j}\,,
\]
provided $|\ell|\lesssim \kappa$,
and its derivatives in $k$ are bounded as follows
\begin{equation}\label{fdf1}
\left|\frac{\partial^j \Omega_{T_{n,\ell}}(x,t,k)}{\partial k^j}\right|\le_j 1, \, k\in I\,.
\end{equation}
Moreover, $\Omega_{T_{n,\ell}}(x,t,k)$ is negligible away from $\{|x-2\pi (n+1)\kappa-2t\alpha(T^\rightarrow)|<\kappa^{1+\delta}, |t|<2\pi T\}$ where $\delta$ is any positive fixed parameter. It will be convenient to work with tubes 
\[T^\rightarrow=T_{n,\ell}:=\{|x-2\pi (n+1)\kappa-2t\alpha(T^\rightarrow)|<2\pi\kappa, \, |t|<2\pi T\}
\] 
that are $k$-independent. Such $T^\rightarrow$ has base (when $t=0$) as an interval $x\in [2\pi n\kappa,2\pi(n+2)\kappa]$ and $2\alpha(T^\rightarrow)$ is its slope in variable $t$. Given $T^\rightarrow$, we denote the corresponding $n$ as $n(T^\rightarrow)$. Notice that $T_{n,\ell}\cap T_{n+j,\ell}=\emptyset$ for $|j|>1$. Combining \eqref{wpack1} and \eqref{klop1}, we obtain the formula for the free evolution of $f$
\begin{equation}\label{isad28}
e^{ik\Delta t}f= \sum_{T^\rightarrow} f^*_{T^\rightarrow}(\eta)\cdot \Bigl(\kappa^{-\frac 12}\Omega_{T^\rightarrow}(x,t,k)\Bigr)\cdot e^{ i\eta(\alpha(T^\rightarrow)(x-2\pi n(T^\rightarrow)\kappa)-\alpha^2(T^\rightarrow) t)}\,,
\end{equation}
where we used $T^\rightarrow$ as the index of summation to account for all $n$ and $\ell$.

 One can be more specific about $\Omega_{T_{n,\ell}}(x,t,k)$: if $\mathcal{U}$ is  a function generated by $h$ in \eqref{gene1} then \eqref{trans1} gives
\begin{equation}\label{pty1}
\Omega_{T_{n,\ell}}(x,t,k)=\sum_{\lambda\in \Z} \Omega_{T_{n,\ell}}^{(\lambda)}(x,t,k),\quad \Omega_{T_{n,\ell}}^{(\lambda)}(x,t,k):=\mathcal{U}_\lambda((x-2\pi (n+1)\kappa)/\kappa-2t\alpha(T^\rightarrow)/\kappa^2,tk/\kappa^2)\,.
\end{equation}
Notice that  $\mathcal{U}_0((x-2\pi (n+1)\kappa)/\kappa-2t\alpha(T^\rightarrow)/\kappa^2, tk/\kappa^2)$ is supported inside the $T_{n,\ell}$ and all other terms $\mathcal{U}_\lambda((x-2\pi (n+1)\kappa)/\kappa-2t\alpha(T^\rightarrow)/\kappa^2, tk/\kappa^2), \lambda\neq 0$ are supported in the tubes obtained by its vertical translations by $2\pi \kappa \lambda$. Although the supports of all terms in \eqref{pty1} are parallel tubes, the contributions from large $\lambda$ are negligible due to \eqref{pio1}. Hence, we can rewrite the formula for evolution \eqref{isad28} in a slightly different form
\begin{equation}\label{isad2}
e^{ik\Delta t}f=\sum_\lambda \sum_{T^\rightarrow} f^*_{T^\rightarrow}(\eta)\cdot \Bigl(\kappa^{-\frac 12}\Omega^{(\lambda)}_{T^\rightarrow}(x,t,k)\Bigr)\cdot e^{ i\eta(\alpha(T^\rightarrow)(x-2\pi n(T^\rightarrow)\kappa)-\alpha^2(T^\rightarrow) t)}\,.
\end{equation}
We now discuss the properties of the coefficients $f_{n,\ell}$ in \eqref{pico1}. Clearly,
\[
f_{n,\ell}(\eta)=\frac{\eta}{(2\pi)^{3/2}}(\widehat f\ast \widehat h_n)(\ell\eta/\kappa),\, \widehat h_n(\xi)=\sqrt\kappa e^{-2\pi i\kappa n\xi}\widehat h(\xi \kappa)\,.
\]
Hence, conditions \eqref{asd1} and \eqref{asd2} imply that
\[
\sum_{|n|\ge 2\kappa} \sum_{\ell\in \Z} |f_{n,\ell}|^2=o(1), \quad
\sum_{|n|\le 2\kappa} \sum_{|\ell|\ge C \delta\kappa}|f_{n,\ell}|^2=o(1)\,,
\]
where $T\to\infty$, $\eta\in I^{-1}$ and $C$ is a positive constant. Since the evolution $U$ in \eqref{evce1} is unitary, that indicates that, when studying $Uf$, we can restrict our attention to wave packets $\omega_{n,\ell}$ with $|n|\le 2\kappa$ and $|\ell|\le C \delta\kappa$ where the last condition is equivalent to $|\alpha(T^\rightarrow)|\lesssim \delta$.
%By Bernstein inequality for Paley-Wiener functions, we get
%\[
%\|\partial_\xi f^*_{n,\ell}(\xi)\|_{L^\infty(I)}\lesssim_I \ell \|f^*_{n,\ell}(\xi)\|_{L^\infty(I)}, I\subset \R^+
%\]
%and, since $\ell\le \kappa$, the function $ f^*_{n,\ell}$ oscillates in $\xi$ at frequencies at most $\kappa$ while the factor 
%\begin{equation}\label{o1}
%e^{ i\xi(\alpha(T^\rightarrow)(x-2\pi n\kappa)-\alpha^2(T^\rightarrow) t)}\end{equation} oscillates in $\xi$ on frequencies $\sim t\alpha^2(T^\rightarrow)$, when inside the characteristic tube, and thus the exponent \eqref{o1}  oscillates in $\xi$ faster than $ f^*_{n,\ell}$ for, e.g., $t\sim T$ and for $\ell\gg 1$.

The expression in \eqref{mt1} will be central for our study and the following theorem is the key for proving Theorem \ref{tg1}. Recall that $\kappa:=T^{\frac 12}, \eta:=k^{-1}$.

\begin{theorem}[The main estimate for {\it ``one-collision operator''}]\label{tg2}  Suppose $V$ and $f$ satisfy assumptions of the Theorem \ref{tg1}, except that $V$ is not necessarily real-valued. Define \label{io1}
\begin{equation}\label{kgh1}
f_o=\sum_{|n|\le 2\kappa, |\ell|\le C\delta\kappa} \omega_{n,\ell}(x,k)f_{n,\ell}(\eta)\,
\end{equation}
and
\begin{equation}\label{coll}
Q=\int_0^{2\pi T} e^{ik\Delta (2\pi T-t)}V(\cdot,t)e^{ik\Delta t}dt\,.
\end{equation}
Then, 
\begin{equation}\label{mesy1}
\left(\int_{I^{-1}}\|Qf_o\|^2d\eta\right)^{\frac 12}\lessapprox \kappa^{-\frac 14}T^{1-\gamma}
\end{equation}
for sufficiently small parameter $\delta$.
\end{theorem}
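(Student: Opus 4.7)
My plan is to expand $e^{ik\Delta t}f_o$ via the wave-packet decomposition \eqref{isad2}, insert the result into \eqref{coll}, and then control $\int_{I^{-1}}\|Qf_o\|^2\,d\eta$ using Lemma \ref{lbbb} together with the geometric transversality between incoming and outgoing wave packets. The tails $\lambda\neq 0$ in \eqref{isad2} contribute negligibly because of the rapid decay of $\mathcal{U}_\lambda$, so I keep only $\lambda=0$ and write
\[
Qf_o \;=\; \kappa^{-1/2}\!\!\sum_{T^{\rightarrow}} f^*_{T^{\rightarrow}}(\eta)\,e^{-2\pi i\eta n(T^\rightarrow)\ell}\!\!\int_0^{2\pi T}\!\! e^{ik\Delta(2\pi T-t)}\!\Big[V(x,t)\,\Omega^{(0)}_{T^{\rightarrow}}(x,t)\, e^{i\eta(\alpha(T^\rightarrow)(x-2\pi n(T^\rightarrow)\kappa)-\alpha^2(T^\rightarrow) t)}\Big]\,dt,
\]
effectively restricted to tubes with $|n(T^\rightarrow)|\le 2\kappa$ and $|\alpha(T^\rightarrow)|\lesssim \delta$.

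Next I would read each interaction as a scattering event. Since $V$ is essentially supported in $\Upsilon_T$ by \eqref{usl1} and $\widehat V$ is supported in $\{\rho_1<|\xi|<\rho_2\}$ by \eqref{ryr1}, Fourier expansion of $V$ writes the bracketed source as a superposition, indexed by $\xi=(\xi_1,\xi_2)$ in that annulus, of bump-functions localized on $T^\rightarrow\cap\Upsilon_T$ and carrying central $x$-frequency $\eta\alpha(T^\rightarrow)+\xi_1$. Each such bump, once acted on by $e^{ik\Delta(2\pi T-t)}$, becomes a wave packet confined to an outgoing tube $T'$ whose slope in $(x,\tau)$ is $2\alpha(T^\rightarrow)+2k\xi_1$. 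Because $|\alpha(T^\rightarrow)|\lesssim\delta$ while $|k\xi_1|\gtrsim 1$, the incoming and outgoing tubes are quantitatively transverse, and collisions occurring in different characteristic cubes $B_{j,\ell}\subset \Upsilon_T$ generate parallel outgoing tubes separated by their natural scale $\kappa$.

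To estimate $\|Qf_o\|_{L^2(I^{-1};L^2_x)}$ I would then apply Lemma \ref{lbbb} to the free propagation in the Duhamel integral, converting each outgoing contribution into an $L^2(dx\,dt)$ bound on its source. Combining this with \eqref{ss1}, $\|V\|_{L^\infty(\Upsilon_T)}\lesssim T^{-\gamma}$, $|T^\rightarrow\cap\Upsilon_T|\lesssim \kappa T$, unitarity of $U$, and the bounded overlap of the $T_{n,\ell}$ already recovers the trivial bound $T^{1-\gamma}$. The genuine gain of a small positive power of $T$ then comes from almost-orthogonality of the outgoing wave packets: pairs of outgoing tubes arising from input tubes of well-separated slopes, from well-separated characteristic cubes of collision, or from well-separated scattering frequencies $\xi$, satisfy a bilinear Fourier-restriction inequality sharper than diagonal Cauchy--Schwarz. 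A dyadic decomposition in these three parameters, balanced against the trivial $L^\infty$ estimate for $V$ on the diagonal, produces an explicit power saving.

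The hard part is this last step: pinning down the precise numerology behind the exponent $1/832$ in $T^{1-\gamma-1/832}$. It demands a careful bilinear (or multilinear) Kakeya-style bookkeeping of the incidence geometry of the outgoing tube family inside $\Upsilon_T$, simultaneously in the slope-separation of inputs, the scattering-angle separation, and the number of collisions along each tube, followed by optimization against the diagonal bound. Everything else---the wave-packet decomposition, the rapid decay of tails, the reduction of $I$ to the dyadic interval $[2,4]$, and the restriction inequality of Lemma \ref{lbbb}---is available or essentially routine.
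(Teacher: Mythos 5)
Your setup is correct as far as it goes: you decompose $f_o$ into forward wave packets, discard the $\lambda\neq 0$ tails, restrict to $|n|\le 2\kappa$, $|\alpha(T^\rightarrow)|\lesssim\delta$, and recognize that the trivial Cauchy--Schwarz argument already gives $T^{1-\gamma}$. But the core mechanism by which the paper gains the extra power $T^{-1/832}$ is absent from your proposal, and the ingredient you do lean on is one that the paper explicitly rules out.

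First, your plan to apply Lemma~\ref{lbbb} cannot work directly. After decomposing $f_o$ the source $V(\cdot,t)e^{ik\Delta t}f_o$ still depends on $k=\eta^{-1}$: the coefficients $f^*_{T^\rightarrow}(\eta)$ and the phases $e^{i\eta(\alpha x-\alpha^2 t)}$ all carry $\eta$-dependence. Lemma~\ref{lbbb} requires the source $g$ to be $k$-independent, and the paper points this out in Section~2 precisely as the reason why a different technique is needed. Second, and more importantly, your proposal decomposes only the incoming side. The paper decomposes both $f_o$ into forward packets $\omega(T^\rightarrow)$ and the output $Qf_o$ into backward packets $\omega(T^\leftarrow)$ and computes the coefficient $\langle Qf_o,\omega(T^\leftarrow)\rangle$. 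This reveals an oscillatory phase $\exp\bigl(2\pi i\eta\kappa\bigl((\alpha^2(T^\rightarrow)-\alpha^2(T^\leftarrow))q+\alpha^2(T^\leftarrow)\kappa+O(1)\bigr)\bigr)$ multiplying each local interaction $F^{(0,0)}_{T^\rightarrow,T^\leftarrow}$. After averaging in $\eta$ over $I^{-1}$, all contributions for which the phase difference between two triples is $\gtrsim\kappa^\epsilon$ are killed (Lemma~\ref{reso}); only triples obeying the \emph{global resonance condition} \eqref{global} survive. That oscillation-in-$\eta$ cancellation, not a bilinear restriction estimate or almost-orthogonality of outgoing packets, is the genuine source of the gain, and it is the reason one needs an average in $\eta$ rather than a pointwise-in-$k$ bound.

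Third, your proposal is silent on the decomposition of $V$ that drives the actual numerology. The paper splits each localized piece $V_{p,q}$ into a $\lambda$-low-frequency part $V^{(low)}_{p,q}$ with $|\widehat V^{(low)}_{p,q}|\lesssim T^{1-\gamma-\lambda}$ and at most $K\lesssim T^{2\lambda}$ high-amplitude pieces $V^{(s)}_{p,q}$, each Fourier-localized to a single $\kappa^{-1}$-ball. These two parts are treated by entirely different arguments: the low-frequency part (Section~5) uses the size gain together with the geometry of global resonances; the high-amplitude pieces (Section~6) use the fact that a sharply localized $\widehat V^{(s)}_{p,q}$ imposes a \emph{local} resonance condition \eqref{pol1} pinning down the scattering direction, and combinatorial control is obtained from the No Lattice Lemma~\ref{nll} and the counting Lemma~\ref{lco}. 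Finally the two bounds are combined and $\lambda=\tfrac{3}{416}$ is optimized in Section~7, which is exactly where the $1/832$ comes from. Your proposal correctly flags the numerology as the hard part, but the path you sketch toward it --- dyadic decomposition plus a "multilinear Kakeya-style" incidence bound without the phase-cancellation step --- does not match the paper's argument and, without the $\eta$-averaging of the oscillatory phase, would not produce a power saving: transversality of tubes alone does not help, since each $T^\rightarrow$ and $T^\leftarrow$ still meet $\sim\kappa$ characteristic cubes and the trivial count gives back only $T^{1-\gamma}$.
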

We call the operator $Q$ from \eqref{coll} the {\it one-collision operator.}
The proof of this theorem will be split into several statements to be discussed in the next sections. We will finish  that proof in Section~\ref{s4}.\bigskip

\section{Interaction of wave packets with $V$ on characteristic cubes.}\label{s3}

Consider the characteristic cubes $B_{p,q}=[2\pi p\kappa, 2\pi(p+1)\kappa]\times [2\pi q\kappa, 2\pi (q+1)\kappa],\, p,q\in \mathbb{Z}^+\,.$
We split the time interval $[0,2\pi T]$ into $\kappa$ equal intervals of length $2\pi\kappa$ so that
\[
[0,2\pi T]=\bigcup_{q=1}^{\kappa}[2\pi\kappa (q-1),2\pi\kappa q]
\]
and $\Upsilon_T$ is covered by $\sim \kappa^2$ characteristic cubes.
%Notice that any two tubes $T_1^\rightarrow$ and $T_2^\rightarrow$ intersect, then $T_1^\rightarrow\cap T_2^\rightarrow\cap \Upsilon_T$ can be covered by at most $C$ characteristic cubes provided that 
%\[
%|\alpha(T_1^\rightarrow)|\le \lambda, \quad |\alpha(T_2^\rightarrow)|>0.5, \quad \lambda\ll 1\,.
%\]
Recall that we study the operator 
\[
Q=\int_{0}^{2\pi T} e^{ik \Delta (2\pi T-\tau)
}V(\cdot,\tau)e^{ik\Delta \tau}d\tau
\]
from \eqref{coll}. We will do that by first taking the partition of unity 
\begin{equation}\label{pino1}
1=\sum_{p,q\in \Z^2}\phi(x/(2\pi \kappa)-p,t/(2\pi\kappa)-q),
\end{equation} where smooth $\phi=1$ on  $0.9\cdot ([0,1]\times [0,1])$ and is zero outside $1.1\cdot ([0,1]\times [0,1])$. If $B$ is a characteristic cube with parameters $(p,q)$, we might  use notation $V_{B}$ instead of $V_{p,q}$. Let 
\begin{equation}\label{ivan1}
V=\sum_{p,q} V_{p,q}(x-2\pi \kappa p,t-2\pi \kappa q), V_{p,q}:=V(x+2\pi p\kappa,t+2\pi q\kappa)\cdot \phi(x/(2\pi\kappa),t/(2\pi\kappa)).
\end{equation}
Later, we will need the following lemma which shows that each $V_{p,q}$ shares the main properties of $V$ but it is localized to $1.1B_{0,0}$ instead of $\Upsilon_T$.
\begin{lemma}\label{leg6} Given assumptions of Theorem \ref{tg2}, we get
\begin{eqnarray}\label{kyta1}
\supp\, V_{p,q}\subset 1.1B_{0,0},\\ \label{ryr19}
 \|V_{p,q}\|_{L^\infty(\R^2)}\lesssim T^{-\gamma}\,,\\
\label{ryr8}
 \|\widehat V_{p,q}(\xi_1,\xi_2)\|_{(L^\infty\cap L^1)( \{\xi: \rho_1-\epsilon<|\xi|<\rho_2+\epsilon\}^c)}\le_{\epsilon,N}T^{-N}
 \end{eqnarray} for every small $\epsilon>0$ and every $N\in \mathbb{N}$.
\end{lemma}
\begin{proof}The first two bounds are immediate and the last one follows from the properties of convolution.
\end{proof}
Later in the text, we will use the following observation several times.
Since $V_{p,q}$ is supported on the ball of radius $C\kappa$, we can write
$V_{p,q}=V_{p,q}\cdot \rho(x/(C_1\kappa),t/(C_1\kappa))$ where $\rho$  a smooth bump function which vanishes outside $B_2(0)$ and is equal to one on $B_{1}(0)$. The constant $C_1$ is taken large enough. Then, 
\begin{equation}\label{niko0}
\widehat V_{p,q}(\xi)=\int_{\R^2} \widehat V_{p,q}(s)D_\kappa(\xi-s)ds, \quad D_\kappa(s):=(C_1\kappa)^2\widehat\rho(C_1\kappa s),\quad \widehat\rho\in \mathbb{S}(\R^2)\,,
\end{equation}
i.e., $\widehat V_{p,q}$ can be written as a convolution of $\widehat V_{p,q}$ with a function well-localized at scale $\kappa^{-1}$ around the origin. Hence, applying the Cauchy-Schwarz inequality, we get
\begin{equation}\label{niko1}
|\widehat V_{p,q}(\xi)|^2\le \int_{\R^2} |\widehat V_{p,q}(s)|^2|D_\kappa(\xi-s)|ds\cdot  \int_{\R^2} |D_\kappa(\xi-s)|ds\lesssim \int_{\R^2} |\widehat V_{p,q}(s)|^2|D_\kappa(\xi-s)|ds\,.
\end{equation}
Informally, that shows that the value of function $|\widehat V_{p,q}|^2$ at any point $\xi$ can be estimated by, essentially, the average of that function on $\kappa^{-1}$-ball around that point plus an error coming from the fast-decaying tail of $\widehat \rho$.

\subsection{Contribution from each characteristic cube $B_{p,q}$.}

We first focus on the contribution to scattering picture coming from $V_{p,q}(x-2\pi \kappa p,t-2\pi \kappa q)$. Notice that in the expression 
\[
\int_{0}^{2\pi T} e^{ik\Delta (2\pi T-\tau)
}V_{p,q}(\cdot-2\pi \kappa p,\tau-2\pi \kappa q)e^{ik\Delta \tau}f_od\tau
\]
the function $V_{p,q}(\cdot-2\pi \kappa p,\tau-2\pi \kappa q)e^{ik\Delta \tau}f_o$ satisfies
\begin{equation}\label{gfa1}
\left\|P_{|\xi|>C}\Bigl(V_{p,q}(\cdot-2\pi \kappa p,\tau-2\pi\kappa q)e^{ik\Delta \tau}f_o\Bigr)\right\|_2\le_{N}T^{-N},\, N\in \N
\end{equation}
for suitable $C$ due to \eqref{ryr8}.

We apply wave packet decomposition for $f_o$ and for $Qf_o$. The wave packets based on $t=0$ will be denoted $\omega^{\rightarrow}_{n,\ell}$ and those corresponding to $t=2\pi T$ will be denoted $\omega^{\leftarrow}_{m,j}$.  
Similarly, the corresponding tubes are $T^{\rightarrow}_{n,\ell}$ and $T^{\leftarrow}_{m,j}$. Recall that, given any forward tube $T^{\rightarrow}$, we denote the corresponding wave packet $\omega^\rightarrow_{n,\ell}$ as $\omega(T^\rightarrow)$, the corresponding coordinate $n=n(T^\rightarrow)$ and the frequency $\ell(T^\rightarrow)=\kappa\alpha(T^\rightarrow)$. The notation for the backward tube is similar. The set of forward tubes relevant to us is 
\begin{equation}\label{lil2}
\mathcal{T}^{\rightarrow}=\{T^\rightarrow: |n(T^\rightarrow)|\le 2\kappa, |\alpha(T^\rightarrow)|\le C\delta\}
\end{equation}
and, thanks to \eqref{gfa1}, the set of backward tubes  of interest  satisfies
$ |n(T^\leftarrow)|\le C\kappa, |\alpha(T^\leftarrow)|\le C$.
From now on, we will take only these tubes into account.
  In estimating $Qf_o$, we will use wave packet decomposition and the bound \eqref{ss1}.

 \begin{picture}(200,150)
\put(160,5){Figure 2}
\put(100,50){\textcolor{yellow}{\rule{220\unitlength}{50\unitlength}}}
\put(20,25){\line(1,0){350}}
\put(325,65){$2\pi T$}
\put(20,125){\line(1,0){350}}
\put(20,15){\vector(0,1){120}}
\put(-10,22){$-2\pi T$}
\put(-3,122){$2\pi T$}
\put(10,135){$x$}
\put(320,15){\line(0,1){120}}
\put(100,50){\line(0,1){50}}
\put(100,100){\line(1,0){220}}
\put(100,50){\line(1,0){220}}
\put(220,60){$\Upsilon_T$}
\put(40,95){$T^\rightarrow$}
\put(260,110){$T^\leftarrow$}
\put(125,70){\line(1,0){60}}
\put(125,60){\line(1,0){60}}
\put(125,50){\line(1,0){60}}
\put(125,80){\line(1,0){60}}
\put(145,50){\line(0,1){40}}
\put(155,50){\line(0,1){40}}
\put(165,50){\line(0,1){40}}
\put(175,50){\line(0,1){40}}
\put(20,90){\vector(5,-1){270}}
\put(20,85){\color{brown}\line(5,-1){250}}
\put(20,95){\color{brown}\line(5,-1){250}}
\put(320,120){\vector(-3,-1){248}}
\put(80,45){\color{brown}\line(3,1){240}}
\put(80,35){\color{brown}\line(3,1){240}}
\linethickness{0.3mm}
\put(145,70){\color{red}\line(1,0){10}}
\put(145,60){\color{red}\line(1,0){10}}
\put(145,60){\color{red}\line(0,1){10}}
\put(155,60){\color{red}\line(0,1){10}}
\put(150,51){$B_{p,q}$}
\end{picture}

The southwestern corner of the cube $B_{p,q}$ has coordinates $2\pi\kappa p, 2\pi\kappa q$ and, if it is intersected by forward tube $T^\rightarrow$ and backward tube $T^\leftarrow$, then their parameters satisfy relations (see Figure 2)
\begin{equation}\label{resd1}
2\alpha(T^\rightarrow) q=p-n(T^\rightarrow)+O(1), \quad 2\alpha(T^\leftarrow)(\kappa-q)=n(T^\leftarrow)-p+O(1)
\end{equation}
and $O(1)$ indicates a real-valued quantity which depends only on  $p,q,T^\leftarrow,T^\rightarrow$ and satisfies a uniform estimate $|O(1)|<C$.

Then, taking $f_o$, applying $Q$, and using \eqref{wpack1} and \eqref{isad2}, we compute the coordinate with respect to backward tube $T^{\leftarrow}$ by the formula (we suppress summation in $\lambda$ here for a moment)
\begin{eqnarray}\nonumber
\langle Q_{p,q}f_o, \omega(T^{\leftarrow})\rangle=\hspace{11cm}\\\nonumber\sum_{T^\rightarrow\in \mathcal{T}^{\rightarrow}} f_{T^\rightarrow}(\eta)\int_0^{2\pi T}   \langle V_{p,q}(x-2\pi\kappa p,t-2\pi\kappa q)  e^{ik\Delta \tau}\omega(T^\rightarrow)
,e^{-ik\Delta (2\pi T-\tau)} \omega(T^\leftarrow)\rangle d\tau=\\
\nonumber
\sum_{T^\rightarrow\in \mathcal{T}^{\rightarrow}} f_{T^\rightarrow}^*(\eta)\int_{\R^2}
\Bigl( V_{p,q}(x-2\pi\kappa p,\tau-2\pi \kappa q) \kappa^{-\frac 12}\Omega_{T^\rightarrow}(x,\tau,k)e^{ i\eta(\alpha(T^\rightarrow)(x-2\pi n(T^\rightarrow)\kappa)-\alpha^2(T^\rightarrow)\tau)}\\  \times\quad  \kappa^{-\frac 12}
\overline{\Omega_{T^\leftarrow}(x,\tau-2\pi T,k)}e^{- i\eta(\alpha(T^\leftarrow)x-\alpha^2(T^\leftarrow)(\tau-2\pi T))} \Bigr)dx d\tau \label{pogk}
=\\\nonumber
\Bigl(
\sum_{T^\rightarrow\in \mathcal{T}^{\rightarrow}} f_{T^\rightarrow}^*(\eta)\int_{\R^2}
\Bigl( V_{p,q}(x-2\pi\kappa p,\tau-2\pi \kappa q) \kappa^{-\frac 12}\Omega_{T^\rightarrow}(x,\tau,k)e^{ i\eta(\alpha(T^\rightarrow)(x-2\pi n(T^\rightarrow)\kappa)-\alpha^2(T^\rightarrow)\tau)} \\ 
 \times\quad  \kappa^{-\frac 12}
\overline{\Omega_{T^\leftarrow}(x,\tau-2\pi T,k)}e^{- i\eta(\alpha(T^\leftarrow)(x-2\pi n(T^\leftarrow)\kappa)-\alpha^2(T^\leftarrow)(\tau-2\pi T))} \Bigr)dx d\tau\Bigr)e^{-i\eta\alpha(T^\leftarrow)2\pi n(T^\leftarrow)\kappa} \,.\nonumber
\end{eqnarray}

If the tubes $T^\leftarrow$ and $T^\rightarrow$ both intersect $B_{p,q}$, then \eqref{resd1} holds and the integral above can be written as \[\exp\Bigl(2\pi i\eta \kappa \Bigl((\alpha^2(T^\rightarrow)-\alpha^2(T^\leftarrow)) q +\alpha^2(T^\leftarrow)\kappa+O(1)\Bigr)\Bigr) F_{T^\rightarrow,T^\leftarrow}(\eta),\] where
\begin{eqnarray}\nonumber F_{T^\rightarrow,T^\leftarrow}(\eta):=
\int_{\R^2} \Bigl( V_{p,q}(x-2\pi\kappa p,\tau-2\pi\kappa q) \Bigl(\kappa^{-1}\Omega_{T^\rightarrow}(x,\tau,k)\overline{\Omega_{T^\leftarrow}(x,\tau-2\pi T,k)}\Bigr)\\e^{ i\eta((\alpha(T^\rightarrow)-\alpha(T^\leftarrow))(x-2\pi p\kappa)-(\alpha^2(T^\rightarrow)-\alpha^2(T^\leftarrow))(\tau-2\pi q\kappa))}\Bigr) dx d\tau\,.\label{eff1}
\end{eqnarray}

Notice that the formula \eqref{pty1} can be applied to $\Omega_{T^\rightarrow}(x,\tau,k)$ and $\Omega_{T^\leftarrow}(x,\tau-2\pi T,k)$ which gives
\begin{eqnarray*}
V_{p,q}(x-2\pi\kappa p,\tau-2\pi \kappa q)\Omega_{T^\rightarrow}(x,\tau,k)\overline{\Omega_{T^\leftarrow}(x,\tau-2\pi T,k)}=\hspace{6cm}\\\hspace{2cm}\sum_{\lambda,\lambda'\in \Z^2} V_{p,q}(x-2\pi\kappa p,\tau-2\pi \kappa q)\Omega_{T^\rightarrow}^{(\lambda)}(x,\tau,k)\cdot \overline{\Omega_{T^\leftarrow}^{(\lambda')}(x,\tau-2\pi T,k)}
\end{eqnarray*}
and, accordingly, 
\begin{equation}\label{laos1}
F_{T^\rightarrow,T^\leftarrow}(\eta)=\sum_{\lambda,\lambda'\in \Z^2}F^{(\lambda,\lambda')}_{T^\rightarrow,T^\leftarrow}(\eta)\,.
\end{equation}
Now, $\Omega_{T^\rightarrow}^{(\lambda)}(x,\tau,k)\cdot \overline{\Omega_{T^\leftarrow}^{(\lambda')}(x,\tau-2\pi T,k)}$ is compactly supported inside the intersection of corresponding tubes and its sup-norm decays fast in $|\lambda|$ and $|\lambda'|$ due to \eqref{pio1}. In all estimates that follow, we will only handle the term that corresponds to $\lambda=\lambda'=0$, i.e.
\begin{eqnarray}\nonumber
\langle Q_{p,q}^{(0,0)}f_o, \omega(T^{\leftarrow})\rangle
= e^{2\pi i\eta(-\kappa \alpha(T^\leftarrow) n(T^\leftarrow)+\kappa^2\alpha^2(T^\leftarrow))} \times \\
\label{reb1}
\sum_{T^\rightarrow: T^\rightarrow\cap B_{p,q}\neq \emptyset} f^*_{T^\rightarrow}(\eta)
e^{ 2\pi i\eta \kappa ((\alpha^2(T^\rightarrow)-\alpha^2(T^\leftarrow)) q +O(1))} F^{(0,0)}_{T^\rightarrow,T^\leftarrow}(\eta)\,,\end{eqnarray}
where the backward tube $T^\leftarrow$ intersects $B_{p,q}$. 
The other terms in \eqref{laos1} lead to the same bounds except that the resulting estimates will involve a strong decay in $|\lambda|$ and $|\lambda'|$. The first factor $e^{2\pi i\eta(-\kappa \alpha(T^\leftarrow) n(T^\leftarrow)+\kappa^2\alpha^2(T^\leftarrow))}$ in the formula above  only depends on $T^\leftarrow$ and it is unimodular. It will play no role in our estimates. Now, $\Omega^{(0)}_{T^\rightarrow}(x,t,k)$ is supported inside $T^\rightarrow$ and we consider
\begin{eqnarray}\nonumber
F^{(0,0)}_{T^\rightarrow,T^\leftarrow}(\eta)=
\int_{\R^2} \Bigl( V_{p,q}(x-2\pi\kappa p,\tau-2\pi\kappa q) \Bigl(\kappa^{-1}\Omega^{(0)}_{T^\rightarrow}(x,\tau,k)\overline{\Omega^{(0)}_{T^\leftarrow}(x,\tau-2\pi T,k)}\Bigr)\times \\e^{ i\eta((\alpha(T^\rightarrow)-\alpha(T^\leftarrow))(x-2\pi\kappa p)-(\alpha^2(T^\rightarrow)-\alpha^2(T^\leftarrow))(\tau-2\pi\kappa q))}\Bigr)dx d\tau\ \label{ker2}=\\\nonumber
\int_{\R^2} \Bigl( V_{p,q}(x,\tau) \Bigl(\kappa^{-1}\Omega^{(0)}_{T^\rightarrow}(x+2\pi\kappa p,\tau+2\pi\kappa q,k)\overline{\Omega^{(0)}_{T^\leftarrow}(x+2\pi\kappa p,\tau+2\pi\kappa q-2\pi T,k)}\Bigr)\times \\e^{ i\eta((\alpha(T^\rightarrow)-\alpha(T^\leftarrow))x-(\alpha^2(T^\rightarrow)-\alpha^2(T^\leftarrow))\tau)}\Bigr)dx d\tau\,.\nonumber
\end{eqnarray}
By  \eqref{pty1}, \eqref{pino1}, and \eqref{pio1}, the function $\phi(x/(2\pi\kappa)- p,t/(2\pi\kappa)- q)\Omega^{(0)}_{T^\rightarrow}(x,\tau,k)\overline{\Omega^{(0)}_{T^\leftarrow}(x,\tau-2\pi T,k)}$ is smooth and is supported on $CB_{p,q}$. Arguing like in \eqref{niko0},  we can write an estimate
\begin{eqnarray}\label{gas1}
|F^{(0,0)}_{T^\rightarrow, T^\leftarrow}(\eta)|\lesssim 
\kappa^{-1}
\Bigl(|\widehat V_{p,q}|\ast | \psi_{\kappa}|\Bigr)(-(\alpha(T^\rightarrow)-\alpha(T^\leftarrow))\eta, (\alpha^2(T^\rightarrow)-\alpha^2(T^\leftarrow))\eta)\,,
\end{eqnarray}
where $\psi_\kappa(\xi_1,\xi_2)=\kappa^2\Psi(\kappa\xi_1,\kappa\xi_2)$, $\Psi\in \mathbb{S}(\R^2)$, and $\Psi$ does not depend on $p,q,T^\rightarrow,T^\leftarrow$ and $k\in I$.  Consider the function $|\widehat V_{p,q}|\ast | \psi_{\kappa}|$. Applying the Cauchy-Schwarz inequality to 
 $|\widehat  V_{p,q}|\ast |\psi_\kappa|$ we get
 \begin{eqnarray}\label{tri2}
 (|\widehat  V_{p,q}|\ast |\psi_\kappa|)^2(\xi)\le \left(\int_{\R^2}|\widehat  V_{p,q}(s)|^2\ast |\psi_\kappa(\xi-s)|ds\right)\cdot \int_{\R^2} |\psi_\kappa(\xi-s)|ds\lesssim \\ \int_{\R^2}|\widehat  V_{p,q}(s)|^2\ast |\psi_\kappa(\xi-s)|ds\,,\nonumber
 \end{eqnarray}
which is the analog of \eqref{niko1}. From the Lemma \ref{leg6}, we conclude that the function $|\widehat V_{p,q}|\ast | \psi_{\kappa}|$ satisfies a bound (check \eqref{ryr8})
\begin{equation}\label{fife1}
\|\Bigl(|\widehat V_{p,q}|\ast | \psi_{\kappa}|\Bigr)\|_{(L^\infty\cap L^1)( \{\xi: \rho_1-\epsilon<|\xi|<\rho_2+\epsilon\}^c)}\le_{\epsilon,N}T^{-N}
 \end{equation} for every small $\epsilon>0$ and every $N\in \mathbb{N}$.
Moreover,
\begin{equation}\label{eey1}
\|\Bigl(|\widehat V_{p,q}|\ast | \psi_{\kappa}|\Bigr)\|_{2}\lesssim T^{\frac 12-\gamma}\,.
 \end{equation}
Formula \eqref{gas1} shows that $F^{(0,0)}_{T^\rightarrow, T^\leftarrow}(\eta)$ is bounded by the restriction of 
$|\widehat V_{p,q}|\ast | \psi_{\kappa}|$ to the ``curve'':
\[(-(\alpha(T^\rightarrow)-\alpha(T^\leftarrow))\eta, (\alpha^2(T^\rightarrow)-\alpha^2(T^\leftarrow))\eta),\quad \eta\in I^{-1}\,.
\]
For fixed $\alpha(T^\rightarrow)$ and $\alpha(T^\leftarrow)$, that is a straight segment or a point $0$ when $\eta\in I^{-1}$. For fixed $\eta\in I^{-1}$ and $\alpha(T^\rightarrow)$, that is a piece of parabola when $\alpha(T^\leftarrow)$ spans a segment.
By \eqref{fife1}, $F^{(0,0)}_{T^\rightarrow, T^\leftarrow}$ is negligible unless (recall that $\eta\in [\frac 14,\frac 12]$)
\begin{equation}\label{san1}
|\alpha(T^\rightarrow)-\alpha(T^\leftarrow)|\sim_{\rho_1,\rho_2} 1, \,|\alpha(T^\rightarrow)+\alpha(T^\leftarrow)|\sim_{\rho_1,\rho_2} 1\,.
\end{equation}
For given $\rho_1$ and $\rho_2$, we can choose positive $\delta$ small enough to guarantee that $|\alpha(T^\rightarrow)|\le \delta$ implies
\begin{equation}\label{ssa1}
|\alpha(T^\leftarrow)|\sim 1\,.
\end{equation}
Without loss of generality, we can therefore assume that $\alpha(T^\leftarrow)\sim 1$.
Hence, the set of backward tubes relevant to us is \[\mathcal{T}^{\leftarrow}:=\{T^\leftarrow: |n(T^\leftarrow)|<C(\rho_1,\rho_2,\delta)\kappa,\,\, 0<C_1(\rho_1,\rho_2,\delta)\le \alpha(T^\leftarrow)\le C_2(\rho_1,\rho_2,\delta)\}\]
so that \eqref{san1} holds for each $T^\rightarrow\in \mathcal{T}^{\rightarrow}$ and $T^\leftarrow\in \mathcal{T}^{\leftarrow}$. 
Notice that each $B_{p,q}\in \Upsilon_T$ is intersected by $\sim \delta \kappa$ tubes  $T^\rightarrow\in \mathcal{T}^\rightarrow$ and by $\sim \kappa$ tubes $T^\leftarrow\in \mathcal{T}^\leftarrow$.

\subsection{Sparsifying $V$.} \label{spaspa} For  $T^\rightarrow\in \mathcal{T}^\rightarrow$ and $T^\leftarrow\in \mathcal{T}^\leftarrow$,  each set $T^\rightarrow\cap T^\leftarrow$ can be covered by at most $C$ cubes $B_{p,q}$ (see Figure  2 above). 

 \begin{picture}(200,120)
\put(160,5){Figure 3}
\put(125,70){\line(1,0){90}}
\put(125,60){\line(1,0){90}}
\put(125,50){\line(1,0){90}}
\put(125,80){\line(1,0){90}}
\put(125,30){\line(1,0){90}}
\put(125,40){\line(1,0){90}}
\put(125,90){\line(1,0){90}}
\put(125,100){\line(1,0){90}}
\put(145,20){\line(0,1){90}}
\put(155,20){\line(0,1){90}}
\put(165,20){\line(0,1){90}}
\put(175,20){\line(0,1){90}}
\put(205,20){\line(0,1){90}}
\put(135,20){\line(0,1){90}}
\put(185,20){\line(0,1){90}}
\put(195,20){\line(0,1){90}}
\linethickness{0.5mm}
\put(135,20){\line(0,1){90}}
\put(165,20){\line(0,1){90}}
\put(195,20){\line(0,1){90}}
\put(125,30){\line(1,0){90}}
\put(125,60){\line(1,0){90}}
\put(125,90){\line(1,0){90}}
\put(145,60){\textcolor{red}{\rule{10\unitlength}{10\unitlength}}}
\put(145,90){\textcolor{red}{\rule{10\unitlength}{10\unitlength}}}
\put(175,60){\textcolor{red}{\rule{10\unitlength}{10\unitlength}}}
\put(175,90){\textcolor{red}{\rule{10\unitlength}{10\unitlength}}}
\put(145,30){\textcolor{red}{\rule{10\unitlength}{10\unitlength}}}
\put(175,30){\textcolor{red}{\rule{10\unitlength}{10\unitlength}}}
\put(205,30){\textcolor{red}{\rule{10\unitlength}{10\unitlength}}}
\put(205,60){\textcolor{red}{\rule{10\unitlength}{10\unitlength}}}
\put(205,90){\textcolor{red}{\rule{10\unitlength}{10\unitlength}}}
\end{picture}

Notice that, for each $P\in \mathbb{N}$, the potential $V$ can be written (by ``sparsifying'' periodically) as 
\begin{equation}\label{sparse}
V(x,t)=\sum_{p,q\in \Z^2}V_{p,q}(x-2\pi \kappa p,t-2\pi \kappa q)=\sum_{\alpha\in \{0,\ldots,P-1\},\beta\in \{0,\ldots,P-1\}} V^{(\alpha,\beta)}(x,t)\,,
\end{equation}
where
each  $V^{(\alpha,\beta)}$, defined by
\[
V^{(\alpha,\beta)}(x,t):=\sum_{n,m\in \Z^2}V_{nP+\alpha,mP+\beta}(x-2\pi\kappa (nP+\alpha), t-2\pi\kappa (mP+\beta))\,,
\]
satisfies a bound
 $
 \dist \,(\,\supp\, V_{nP+\alpha,mP+\beta},\supp\, V_{n'P+\alpha,m'P+\beta})\gtrsim P\kappa
 $
 for all $(n,m)\neq (n',m')$. Figure~3 has $P=3$ and the red boxes correspond to choosing $\alpha=0$ and $\beta=1$.
 
   Therefore, for $P$ large enough, $T^\rightarrow\in \mathcal{T}^\rightarrow$ and $T^\leftarrow\in \mathcal{T}^\leftarrow$, the set  $T^\rightarrow\cap T^\leftarrow$ intersects at most one characteristic cube out of the set $B_{nP+\alpha,mP+\beta}$ when $\alpha$ and $\beta$ are fixed.
   
    Hence, going from one function $V$ to any of  $P^2$ functions $V^{(\alpha,\beta)}$, we can always guarantee that {\it each set  $T^\rightarrow\cap T^\leftarrow$ intersects at most one
$B_{p,q}$}. Going forward, we can assume without loss of generality that this property holds for the original $V$.
\smallskip

\subsection{Contribution from all of $V$.}
In the general case, given $T^\leftarrow$, we need to account for all characteristic cubes that intersect it. Hence, we need to control
\begin{eqnarray}\label{lhl1}
D_{T^\leftarrow}(\eta):= e^{2\pi i\eta(-\kappa \alpha(T^\leftarrow) n(T^\leftarrow)+\kappa^2\alpha^2(T^\leftarrow))}\times \hspace{4cm}\\
\sum_{B_{p,q}: B_{p,q}\cap T^{\leftarrow}\neq \emptyset}\quad \sum_{T^\rightarrow: B_{p,q}\cap T^{\rightarrow}\neq \emptyset} f_{T^\rightarrow}^*(\eta)e^{2\pi i\eta \kappa \left((\alpha^2(T^\rightarrow)-\alpha^2(T^\leftarrow)) q +O(1)\right) } F^{(0,0)}_{T^\rightarrow,T^\leftarrow}(\eta)\,.\nonumber
\end{eqnarray}
Take $\mu(\eta)$, any smooth non-negative bump function supported on $I^{-1}$ and write
\begin{eqnarray}\label{lhl2}
\int_{\R} \mu(\eta)\left|D_{T^\leftarrow}(\eta)\right|^2d\eta=\int_{\R} \mu(\eta)\times \hspace{5cm}\\ 
\sum_{B_{p,q}\cap T^{\leftarrow}\neq \emptyset }
\sum_{B_{p',q'}\cap T^{\leftarrow}\neq \emptyset}
\sum_{\stackrel{T^\rightarrow: }{B_{p,q}\cap T^{\rightarrow}\neq \emptyset}  }
\sum_{\stackrel{ T'^{\rightarrow}:}{ B_{p',q'}\cap T'^{\rightarrow} \neq \emptyset}  }
e^{2\pi i\eta \kappa \left((\alpha^2(T^\rightarrow)-\alpha^2(T^\leftarrow)) q -(\alpha^2(T'^\rightarrow)-\alpha^2(T^\leftarrow)) q'+O(1)\right)}
\nonumber
\\ \nonumber \times\,\,
f_{T^\rightarrow}^*(\eta)\overline{f_{T'^\rightarrow}^*(\eta)}
F^{(0,0)}_{T^\rightarrow,T^\leftarrow}(\eta)\overline{F^{(0,0)}_{T'^\rightarrow,T^\leftarrow}(\eta)}d\eta=\Sigma^{(nr)}_{T^\leftarrow}+\Sigma^{(r)}_{T^\leftarrow}\,,
\end{eqnarray}
where $\Sigma^{(r)}_{T^\leftarrow}$ is the sum that corresponds to the resonance indexes given by the following {\it global resonance conditions:}
\begin{equation}\label{global}\,\,
 \left|\left((\alpha^2(T^\rightarrow)-\alpha^2(T^\leftarrow)) q -(\alpha^2(T'^\rightarrow)-\alpha^2(T^\leftarrow)) q' \right)\right|\lessapprox 1\,,
\end{equation}
that do not depend on the choice of $V$. For definiteness, we can assume that the bound $\lessapprox 1$ takes the form: $|\cdot |=O(R), R= C_{\varepsilon} \kappa^{\varepsilon}$ with a fixed positive $\varepsilon$ that can be chosen arbitrarily small. Having mentioned that, we define $\mathcal{G}(T^\rightarrow, T'^\rightarrow)$ as a set of backward tubes $T^\leftarrow$ that form the global resonance with $T^\rightarrow$ and $T'^\rightarrow$.

\begin{lemma}We have $|\Sigma^{(nr)}_{T^\leftarrow}|\le C_j\kappa^{-j}$ for every $j\in \mathbb{N}$.\label{reso}
\end{lemma}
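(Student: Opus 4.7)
The plan is to exploit the oscillation in $\eta$ present in each summand of \eqref{lhl2} via integration by parts. Write $\Psi := (\alpha^2(T^\rightarrow)-\alpha^2(T^\leftarrow))q - (\alpha^2(T'^\rightarrow)-\alpha^2(T^\leftarrow))q'$ for the phase coefficient; by the negation of \eqref{global}, every quadruple $(T^\rightarrow, T'^\rightarrow, B_{p,q}, B_{p',q'})$ contributing to $\Sigma^{(nr)}_{T^\leftarrow}$ satisfies $|\Psi| > C_{\epsilon'}\kappa^{\epsilon'}$. Thus the total phase $2\pi\eta\kappa\Psi + O(1)$ has derivative in $\eta$ of size $\gtrsim \kappa^{1+\epsilon'}$. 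Since $\mu\in C_c^\infty(I^{-1})$, no boundary terms arise, and integrating by parts $N$ times pulls out a factor $(2\pi\kappa\Psi)^{-N}\lesssim \kappa^{-N(1+\epsilon')}$, while replacing the amplitude $\mu(\eta)\cdot f^*_{T^\rightarrow}(\eta)\overline{f^*_{T'^\rightarrow}(\eta)}F^{(0,0)}_{T^\rightarrow,T^\leftarrow}(\eta)\overline{F^{(0,0)}_{T'^\rightarrow,T^\leftarrow}(\eta)}$ by its $N$-th $\eta$-derivative.

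The next step is to verify that each $\eta$-derivative of any amplitude factor costs at most one power of $\kappa$ pointwise. From \eqref{lil1}, differentiating $f^*_{T^\rightarrow}$ brings out either $1/\eta$ (harmless, since $\eta \in I^{-1}$) or the factor $-i\ell s/\kappa$; since $|\ell|\lesssim \delta\kappa$ and $|s|\le 4\pi\kappa$ on the integrand's support, this is $O(\kappa)$. From \eqref{ker2}, differentiating $F^{(0,0)}_{T^\rightarrow,T^\leftarrow}$ produces the factor $i\bigl((\alpha(T^\rightarrow)-\alpha(T^\leftarrow))(x-2\pi\kappa p) - (\alpha^2(T^\rightarrow)-\alpha^2(T^\leftarrow))(\tau-2\pi\kappa q)\bigr)$, which is $O(\kappa)$ on the support of $V_{p,q}(x-2\pi\kappa p,\tau-2\pi\kappa q)$ because $|\alpha|\lesssim 1$ and $V_{p,q}$ is supported on $1.1B_{0,0}$. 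By Leibniz, $|\partial_\eta^N(\mu\cdot\text{amplitude})|\lesssim_N \kappa^N$ times the product of the pointwise bounds $|f^*_{T^\rightarrow}|\lesssim \kappa^{1/2}\|f\|_2$ and $|F^{(0,0)}_{T^\rightarrow,T^\leftarrow}|\lesssim T^{-\gamma}\kappa$, the latter being obtained by directly estimating \eqref{ker2} using $\|V\|_\infty\lesssim T^{-\gamma}$ and the $\kappa^2$-size of $\supp V_{p,q}$.

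Combining, each non-resonant quadruple contributes at most $\kappa^{-N(1+\epsilon')}\cdot \kappa^{N}\cdot \kappa^{O(1)} = \kappa^{-N\epsilon' + O(1)}$ to $\Sigma^{(nr)}_{T^\leftarrow}$, where the $\kappa^{O(1)}$ absorbs the pointwise amplitude bounds and $|\supp\mu|\lesssim 1$. Summing over the relevant quadruples: there are $|\mathcal{T}^\rightarrow|^2\lesssim \kappa^4$ choices of $(T^\rightarrow,T'^\rightarrow)$ (since $|n|\lesssim \kappa$ and $|\ell|\lesssim \delta\kappa$ for each forward tube), and for the fixed $T^\leftarrow$ the number of pairs $(B_{p,q},B_{p',q'})$ meeting $T^\leftarrow$ is $\lesssim \kappa^2$, yielding at most $\kappa^{O(1)}$ quadruples in total. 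Choosing $N=N(n,\epsilon')$ large enough then gives $|\Sigma^{(nr)}_{T^\leftarrow}|\lesssim \kappa^{-N\epsilon' + O(1)}\lesssim \kappa^{-n}$, which is the claim.

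The main technical obstacle will be the bookkeeping in the second step: checking, uniformly over all relevant quadruples, that $\eta$-derivatives of $f^*_{T^\rightarrow}$ and $F^{(0,0)}_{T^\rightarrow,T^\leftarrow}$ truly cost at most one power of $\kappa$ each, and that no boundary contributions survive the integrations by parts. Both facts reduce to direct inspection of the explicit formulas \eqref{lil1} and \eqref{ker2} together with the compact support of $\mu$, so the estimate is essentially routine stationary-phase bookkeeping once the phase gradient lower bound $|\Psi|>C_{\epsilon'}\kappa^{\epsilon'}$ is in hand.
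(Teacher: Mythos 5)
Your proof is correct and is in essence the same non-stationary phase argument as the paper's. The paper implements it by observing, from \eqref{lil1} and \eqref{ker2}, that the amplitude $e^{2\pi i\kappa\eta O(1)}f^*_{T^\rightarrow}\overline{f^*_{T'^\rightarrow}}F^{(0,0)}_{T^\rightarrow,T^\leftarrow}\overline{F^{(0,0)}_{T'^\rightarrow,T^\leftarrow}}$ equals $\int_{-C\kappa}^{C\kappa}W(s)e^{is\eta}ds$ with $\|W\|_\infty\lesssim\kappa^{10}$, so the $\eta$-integral becomes $(\widehat\mu\ast W)(2\pi\kappa\Psi)$, which is tiny because $\widehat\mu\in\mathbb{S}(\R)$, $\supp W\subset[-C\kappa,C\kappa]$, and $|2\pi\kappa\Psi|\gtrsim\kappa^{1+\epsilon'}$; you instead integrate by parts $N$ times, using exactly the same facts (the constant phase derivative $2\pi\kappa(\Psi+O(1))\gtrsim\kappa^{1+\epsilon'}$, each amplitude $\eta$-derivative costing $O(\kappa)$, and the compact support of $\mu$ killing boundary terms) — these are two standard packagings of the same estimate and give the same conclusion after summing the $\lesssim\kappa^4$ quadruples.
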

\begin{proof}
Recall that \eqref{kgh1} restricts $|\ell|\le C\delta \kappa$ in \eqref{lil1}.  Next, one can apply a non-stationary phase argument. In the integral
\[
\int_{\R} \mu
f_{T^\rightarrow}^*\overline{f_{T'^\rightarrow}^*}e^{2\pi i\eta \kappa \left((\alpha^2(T^\rightarrow)-\alpha^2(T^\leftarrow)) q -(\alpha^2(T'^\rightarrow)-\alpha^2(T^\leftarrow)) q' \right)}e^{2\pi i\kappa \eta O(1)}F^{(0,0)}_{T^\rightarrow,T^\leftarrow}\overline{F^{(0,0)}_{T'^\rightarrow,T^\leftarrow}}d\eta\,,
\]
we use \eqref{lil1}, \eqref{eff1}, and the bound \eqref{fdf1} to integrate in $\eta$ by parts consecutively to get
\begin{eqnarray*}
\left|\int_{\R} \mu
f_{T^\rightarrow}^*\overline{f_{T'^\rightarrow}^*}e^{2\pi i\eta \kappa \left((\alpha^2(T^\rightarrow)-\alpha^2(T^\leftarrow)) q -(\alpha^2(T'^\rightarrow)-\alpha^2(T^\leftarrow)) q' \right)}e^{2\pi i\kappa \eta O(1)}F^{(0,0)}_{T^\rightarrow,T^\leftarrow}\overline{F^{(0,0)}_{T'^\rightarrow,T^\leftarrow}}d\eta\right|\lesssim \\
\le_{j,\varepsilon}\kappa^{-j}, j\in \mathbb{N}, \varepsilon>0\,,
\end{eqnarray*}
provided that
$
\left|\left((\alpha^2(T^\rightarrow)-\alpha^2(T^\leftarrow)) q -(\alpha^2(T'^\rightarrow)-\alpha^2(T^\leftarrow)) q' \right)\right|\ge \kappa^\varepsilon\,.
$
Since the quadruple sum in \eqref{lhl2}  contains at most $C\kappa^4$ terms, we get the statement of the lemma.
\end{proof}
This lemma shows that we only need to focus on the resonant terms which constitute a small proportion of all possible combinations. We will study these resonant configurations next.\smallskip

\section{Resonant terms and proofs of the main results.} \label{s4} For the generic $\eta\in I^{-1}$, our goal is to estimate $\sum_{T^\leftarrow}|D_{T^\leftarrow}(\eta)|^2$ where $D_{T^\leftarrow}$ is from \eqref{lhl1}. By Lemma~\ref{reso},
 the contribution from non-resonant indexes is negligible. Consider the sum of the resonant terms. It can be rewritten in the following form 
\begin{eqnarray}\label{kook1}
A:=\sum_{T^\leftarrow} \quad \widehat{\sum_{T^\rightarrow,T'^\rightarrow }} f^*_{T^\rightarrow}\bar f^*_{T'^\rightarrow}\times\hspace{6cm} \\  \nonumber e^{2\pi i\eta \kappa \left((\alpha^2(T^\rightarrow)-\alpha^2(T^\leftarrow)) q -(\alpha^2(T'^\rightarrow)-\alpha^2(T^\leftarrow)) q' \right)}
 e^{2\pi i\kappa \eta O(1)}F^{(0,0)}_{T^\rightarrow,T^\leftarrow}\overline{F^{(0,0)}_{T'^\rightarrow,T^\leftarrow}} \\
   =\sum_{T^\rightarrow,T'^\rightarrow } f^*_{T^\rightarrow}\bar f^*_{T'^\rightarrow} \times \hspace{6cm}\label{kook2}\\{\sum_{T^\leftarrow\in \mathcal{G}(T^\rightarrow,T'^\rightarrow)}}\nonumber   e^{2\pi i\eta \kappa \left((\alpha^2(T^\rightarrow)-\alpha^2(T^\leftarrow)) q -(\alpha^2(T'^\rightarrow)-\alpha^2(T^\leftarrow)) q' \right)}
 e^{2\pi i\kappa \eta O(1)}F^{(0,0)}_{T^\rightarrow,T^\leftarrow}\overline{F^{(0,0)}_{T'^\rightarrow,T^\leftarrow}}\,,\nonumber
\end{eqnarray}
where $T^\leftarrow\in \mathcal{T}^\leftarrow$ and $ T^\rightarrow,T'^\rightarrow\in \mathcal{T}^\rightarrow$. The symbol $\widehat\sum$ in the first formula indicates that the summation is done over the resonant configurations, i.e., those for which \eqref{global} holds.\smallskip

\noindent{\it Proof of the Theorem \ref{tg2}.} In the wave packet decomposition \eqref{wpack1}, we organize summation in parameter $\ell: |\ell|\le \delta \kappa$ dyadically and write $f_o=\sum_{j\le N}f_j$, $N= \log_2(\delta \kappa)$ and each $f_j$ corresponds to the the range $|\ell|\sim \ell_j=2^{j-N}\delta\kappa$, i.e., $2^{j-N-1}\delta\kappa<|\ell|\le 2^{j-N}\delta\kappa$. Specifically, 
$
f_j(x)=\sum_{n\in \Z}\sum_{|\ell|\sim 
\ell_j} \omega_{n,\ell}(x,k)f_{n,\ell}(\eta)\,.
$
From \eqref{ss1}, we get
\begin{equation}\label{pogh1}
\|f_o\|_2^2\sim \sum_{j}\|f_j\|_2^2, \quad \|f_j\|_2\lesssim \|f_o\|_2\,.
\end{equation}
Applying the triangle inequality and then Cauchy-Schwarz bound, 
one can write
\begin{equation}\label{nov7}
\|Qf_o\|_{2,2}^2\le \left(\sum_{j\le N}\|Qf_j\|_{2,2}\right)^2\lesssim N \sum_{j\le N}\|Qf_j\|_{2,2}^2   \,, 
\end{equation}
where $\|g(x,\eta)\|_{2,2}$ for a function $g$ refers to the norm $\left(\int_{I^{-1}}\int_{\R}|g(x,\eta)|^2dxd\eta\right)^{\frac 12}$. The size of $N$ will be negligible in the later estimates so we will focus on estimating each term in the sum.

 Consider $f_j$ that corresponds to the dyadic shell $|\ell|, |\ell'|\sim \ell_j$ and focus on the corresponding expression 
 $
 \sum_{T^\leftarrow\in \mathcal{T}^\leftarrow}\Bigl(\Sigma^{(nr)}_{T^\leftarrow}+\Sigma^{(r)}_{T^\leftarrow}\Bigr)
 $
 (see \eqref{lhl2}), where $f_j$ is used instead of $f_o$ and therefore the tubes $T^\rightarrow$ and $T'^\rightarrow$  satisfy $|\alpha(T^\rightarrow)|\sim \ell_j/\kappa, |\alpha(T'^\rightarrow)|\sim \ell_j/\kappa$. The contribution from the non-resonant terms is negligible so we focus only on the resonant terms collected in $A_j$, which is defined as in \eqref{kook1}. Using a trivial bound $ab\le (a^2+b^2)/2$, we get
\begin{eqnarray}\label{kook2}
|A_j|\lesssim A_j^{(1)}+A_j^{(2)},\\
 A_j^{(1)}=\sum_{T^\rightarrow } |f^*_{T^\rightarrow}|^2\sum_{T'^\rightarrow } 
 \sum_{T^\leftarrow\in \mathcal{G}(T^\rightarrow,T'^\rightarrow)}
 |F^{(0,0)}_{T'^\rightarrow,T^\leftarrow}|^2,\\
 A_j^{(2)}=\sum_{T'^\rightarrow } |f^*_{T'^\rightarrow}|^2\sum_{T^\rightarrow } 
 \sum_{T^\leftarrow\in \mathcal{G}(T^\rightarrow,T'^\rightarrow)}
 |F^{(0,0)}_{T^\rightarrow,T^\leftarrow}|^2\,.
\end{eqnarray}
We will estimate $A_j^{(1)}$, the bound for $A_j^{(2)}$ is similar. From \eqref{lil1}, one has
\[
f^*_{n,\ell}(\eta)=\frac{\eta}{ 2\pi \sqrt\kappa}\int_{0}^{4\pi \kappa}f_j(s+2\pi n\kappa)h(s\kappa^{-1})e^{- i\ell \eta s\kappa^{-1}} ds\,.
\]
We write $\xi=\ell \eta/\kappa$ and recall that $|\ell|\sim \ell_j$. Then,
\begin{equation}\label{pobg1}
\int_{I^{-1}}A_j^{(1)}d\eta\lesssim 
\frac{1}{\ell_j} \sum_n \int_{|\xi| \kappa/\ell_j\in 2I^{-1}}\left| \int_{0}^{4\pi \kappa}f_j(s+2\pi n\kappa)h(s\kappa^{-1})e^{- i\xi s} ds\right|^2 M_{n}(\xi)d\xi\,,
\end{equation}
where, with fixed $n$,
\[
M_n(\xi):= \sum_{\ell} \sum_{T'^\rightarrow } 
 \sum_{T^\leftarrow\in \mathcal{G}(T_{n,\ell}^\rightarrow,T'^\rightarrow)}
 |F^{(0,0)}_{T'^\rightarrow,T^\leftarrow}(\xi\kappa/\ell)|^2
\]
and we again emphasize  that $|\ell|\sim \ell_j, |\ell'|\sim \ell_j$ in the sum above. Recall that the symbol $R$ denotes a positive quantity that satisfies $R\lessapprox 1$.
If we can show that 
\begin{equation}
\label{nv1}
\sup_{|\xi|\kappa/\ell_j\in 2I^{-1}}M_n(\xi)\lessapprox  (\ell_j\kappa^{-\frac 12}+1)T^{1-2\gamma}(\ell_j+R)\kappa, \quad 
% (\delta^2\kappa^{\frac 12}+\delta)T^{2-2\gamma}
\end{equation}
then the Plancherel theorem  would yield (after we extend integration in $\xi$ over the whole real line $\xi\in \R$ in the right-hand side of \eqref{pobg1}):
\begin{equation}\label{nov8}
\int_{I^{-1}}A_j^{(1)}d\eta\lessapprox  \frac{(\ell_j\kappa^{-\frac 12}+1)T^{1-2\gamma}(\ell_j+O(R))\kappa}{\ell_j}\|f_j\|_2^2\,.
%(\delta\kappa^{-\frac 12}+\kappa^{-1})T^{2-2\gamma}\|f_j\|_2^2\,.
\end{equation}
We focus on proving \eqref{nv1} now, where we can choose  $n=0$ without loss of generality. Consider the expression
\begin{equation}\label{comfi}
\sum_{|\ell|\sim \ell_j} \sum_{T'^\rightarrow } 
 \sum_{T^\leftarrow\in \mathcal{G}(T_{0,\ell}^\rightarrow,T'^\rightarrow)}
 |F^{(0,0)}_{T'^\rightarrow,T^\leftarrow}(\xi\kappa/\ell)|^2
\end{equation}
that defines $M_n$.
Here, all tubes $T'^\rightarrow$ satisfy condition $|\ell'|\sim \ell_j$.

\begin{picture}(200,150)
\linethickness{0.2mm}
\put(5,45){\color{brown}\vector(1,-0.1){200}}
\put(5,90){\color{brown}\vector(5,-1){250}}

\put(5,5){\vector(0,1){120}}
%\put(5,59){\line(1,0){250}}

\put(5,90){\vector(1,0){250}}
\put(250,80){$t$}

\put(132,29){\line(0,1){61}}

\put(160,29){\line(0,1){61}}

\put(132,32){\line(1,0){29}}

\put(135,20){$B'$}

\put(148,65){$B$}

\put(-5,90){$0$}

\put(-5,120){$x$}

\put(70,25){$T'^\rightarrow$}
\put(230,5){Figure 4}
\put(70,81){$T^\rightarrow$}
\put(195,80){$T^\leftarrow$}
\put(210,110){\color{brown}\vector(-1,-1){100}}

%\put(107,110){\color{brown}\vector(-1,-1.08){95}}

\put(130,30){\textcolor{red}{\rule{4\unitlength}{4\unitlength}}}

\put(158,57){\textcolor{red}{\rule{4\unitlength}{4\unitlength}}}

\end{picture}

We can organize \eqref{comfi} by first summing over all cubes $B'$ that are at the intersection of $T'^\rightarrow,T^\leftarrow$, i.e., $B'\cap (T'^\rightarrow\cap T^\leftarrow)\neq \emptyset$. By assumptions that we made in subsection \ref{spaspa}, this cube is unique if it exists. Hence,
\begin{equation}\label{comfi1}
\sum_{|\ell|\sim \ell_j} \sum_{T'^\rightarrow } 
 \sum_{T^\leftarrow\in \mathcal{G}(T_{0,\ell}^\rightarrow,T'^\rightarrow)}
 |F^{(0,0)}_{T'^\rightarrow,T^\leftarrow}(\xi\kappa/\ell)|^2=\sum_{B'}\sum_{|\ell|\sim \ell_j} \sum_{T'^\rightarrow } 
 \sum_{\stackrel{T^\leftarrow\in \mathcal{G}(T_{0,\ell}^\rightarrow,T'^\rightarrow):}{ B'\cap (T'^\rightarrow\cap T^\leftarrow)\neq \emptyset}}
 |F^{(0,0)}_{T'^\rightarrow,T^\leftarrow}(\xi\kappa/\ell)|^2\,.
\end{equation}

Fix any $B'$. If $n'$ in $T'^\rightarrow=T^\rightarrow_{n',\ell'}$ is given, it defines $\ell'$ essentially uniquely since the cube $B'$ is at least $c\kappa$ units away from the axis $OX$. We will show that for each $B'$, one has
\begin{equation}\label{nov5}
\sum_{|\ell|\sim \ell_j} \sum_{T'^\rightarrow } 
  \sum_{\stackrel{T^\leftarrow\in \mathcal{G}(T_{0,\ell}^\rightarrow,T'^\rightarrow):}{ B'\cap (T'^\rightarrow\cap T^\leftarrow)\neq \emptyset}}
 |F^{(0,0)}_{T'^\rightarrow,T^\leftarrow}(\xi\kappa/\ell)|^2\lessapprox (\ell_j\kappa^{-\frac 12}+1)T^{1-2\gamma}. 
 %X=(\delta\kappa^{\frac 12}+1)T^{1-2\gamma}\,.
\end{equation}
Let $B: B\cap(T^\rightarrow_{0,\ell}\cap T^\leftarrow)\neq \emptyset$ be the other cube from the global resonance condition between $T^\rightarrow_{0,\ell}, T'^\rightarrow$, and $ T^\leftarrow$.
Recall that if the ``unit coordinates'' of $B$ are $(p,q)$, the actual coordinates of the southwestern corner of $B$ are $x=2\pi p\kappa, t=2\pi q\kappa$.
Given that $|\ell|\sim \ell_j$, the ``unit coordinates" $(p,q)$ of $B$ satisfy $|p|\sim \ell_j, q\sim \kappa$. Then,  the total number of all possible cubes $B$ satisfies $\#B\lesssim \ell_j\kappa$. The global resonance condition \eqref{global} gives
\[
O(R)=\frac{\alpha^2(T^\rightarrow)-\alpha^2(T^\leftarrow)}{\alpha^2(T'^\rightarrow)-\alpha^2(T^\leftarrow)}q-q'=q-q'+q\frac{\alpha^2(T^\rightarrow)-\alpha^2(T'^\rightarrow)}{\alpha^2(T'^\rightarrow)-\alpha^2(T^\leftarrow)}
\]
and it shows that $|q-q'|\lesssim \ell_j+R$ because $|\alpha(T^\rightarrow)|\lesssim \ell_j/\kappa$ and $|\alpha(T'^\rightarrow)|\lesssim \ell_j/\kappa$. Therefore, the cubes $B$ and $B'$ that are involved in the global resonance are at most $C(\ell_j+R)$ units away from each other. Hence, we get $\# B'\lesssim (\ell_j+R)\kappa$ for the number of cubes $B'$ and 
\eqref{nov5} implies  \eqref{nv1}. 
To prove \eqref{nov5}, we will show that
\begin{equation}\label{nv6}
\sum_{|\ell|\sim \ell_j} \sum_{T'^\rightarrow } 
 \sum_{\stackrel{T^\leftarrow\in \mathcal{G}(T_{0,\ell}^\rightarrow,T'^\rightarrow):}{ B'\cap (T'^\rightarrow\cap T^\leftarrow)\neq \emptyset}}
 |F^{(0,0)}_{T'^\rightarrow,T^\leftarrow}(\xi\kappa/\ell)|^2\lesssim (\ell_j\kappa^{-\frac 12}+1) \int |\widehat{V}_{B'}(s_1,s_2)|^2ds_1ds_2
 %(\delta\kappa^{\frac 12}+1)
\end{equation}
for every $B'$.
Consider \eqref{gas1}. On the Fourier side, cover the  annulus $\{\rho_1-\epsilon<|\xi|<\rho_2+\epsilon\}$ that essentially supports $\widehat V_{B'}$ (see \eqref{ryr8}) by at most $C'\kappa^2$ balls or radius $\kappa^{-1}$ such that no point in that annulus belongs to more than $C$ such balls.
By \eqref{gas1}, every choice of $\ell, n'$ and $T^\leftarrow$ gives an evaluation of $|\widehat V_{B'}|\ast |\psi_\kappa|$ at a particular frequency which is at the intersection of at most $C$  balls. Since the value of $(|\widehat V_{B'}|\ast |\psi_\kappa|)^2$ at any point is essentially dominated by the average of $|\widehat V_{B'}|^2$ over  its $\kappa^{-1}$--neighborhood (see \eqref{tri2} for the precise bound), we only need to show that every such ball can correspond to at most $\lessapprox \ell_j\kappa^{-\frac 12}+1$ such triples $(\ell,n',T^\leftarrow)$ (recall here that $n'$ determines $\ell'$ essentially uniquely). 

We will use notation $\alpha=\alpha(T^\rightarrow)=\ell/\kappa$, $\alpha'=\alpha(T'^\rightarrow)=\ell'/\kappa$, and $\beta=\alpha(T^\leftarrow)$.
Checking the argument on the right-hand side of \eqref{gas1}, we obtain
\[
\xi \frac{\kappa(\alpha'-\beta)}{\ell}=\xi_1+O(\kappa^{-1}), \quad \xi \frac{\kappa(\alpha'^2-\beta^2)}{\ell}=\xi_2+O(\kappa^{-1})\,,
\]
where $(-\xi_1,\xi_2)$ is the center of the fixed ball in a partition and the bounds in \eqref{san1} yield  $|\xi_{1(2)}|\sim 1$. Recall that $|\xi|\sim \ell_j/\kappa$, $|\alpha|\sim |\alpha'|\sim \ell_j/\kappa$ and $|\beta|\sim 1$. Let $\mu_1=0.5\xi_1\xi^{-1}$ and $\mu_2=0.5\xi_2\xi^{-1}_1$. Then,
\begin{eqnarray}\label{nv3}
\alpha'=\frac 12\left(\xi^{-1}\xi_1\alpha+\frac{\xi_2}{\xi_1} \right)+O(\kappa^{-1})=\mu_1\alpha+\mu_2+O(\kappa^{-1}),\\ \beta=\frac 12\left(\frac{\xi_2}{\xi_1}-\xi^{-1}\xi_1\alpha\right)+O(\kappa^{-1})=\mu_2-\mu_1\alpha+O(\kappa^{-1})\label{nv4}
\end{eqnarray}
and we get $|\mu_1|\sim |\xi|^{-1}\sim \kappa/\ell_j, |\mu_2|\sim 1$.  These formulas  show that each $\alpha$ defines $\alpha'=\ell'/\kappa$ and $\beta$ up to $O(\kappa^{-1})$ correction, i.e., essentially uniquely. Now, we will employ the global resonance condition to get the restrictions on the possible range of $\alpha$.

Let $B'$ has ``unit coordinates'' $q'=\kappa$ and $p'=2h\kappa$ (all other values of $q'$ can be handled similarly). We already saw that $|h|\kappa\lesssim \ell_j+R$.  Recall that the slope of each tube $T^\rightarrow$ equals  $2\alpha(T^\rightarrow)$. Then, if $B$ has ``unit coordinates'' $(p,q)$, we can define  $t:=q-\kappa$ and use elementary geometric considerations (check Figure~4) to get
\[p=2\alpha (\kappa+t)+O(1)=2h\kappa+2t\beta+O(1)\,.
\]
So, $t={\kappa(\alpha-h)}/{(\beta-\alpha)}+O(1)$ and 
\[
q=\frac{\kappa(\beta-h)}{\beta-\alpha}+O(1)\,.
\]
We substitute that expression, along with formulas \eqref{nv3} and \eqref{nv4} for $\alpha'$ and $\beta$, into the global resonance condition \eqref{global}:
\[
(\alpha^2-\beta^2)q=(\alpha'^2-\beta^2)q'+O(R)\,,
\]
where $q'=\kappa$.
That gives an equation for finding $\alpha$:
\[
H(\alpha):=\alpha^2\mu_1(\mu_1-1)-\alpha(h(1-\mu_1)-\mu_2-2\mu_1\mu_2)+\mu_2(\mu_2-h)=O(R/\kappa)\,.
\]
We recall that $|\mu_1|\sim \kappa/\ell_j\gg 1,|\mu_2|\sim 1$ and $|\alpha|\sim \ell_j/\kappa$.
The function $H/(\mu_1(\mu_1-1))$ is a quadratic polynomial and its leading coefficient is equal to one. So, the allowed interval for solution $\alpha$ is at most $\ell_jR^{\frac 12}\kappa^{-\frac 32}$ in length, which comes from the case when the   roots of the parabola are close to each other (see Figure~5).

\begin{picture}(200,150)
\linethickness{0.2mm}
%\put(5,45){\color{brown}\vector(1,-0.1){200}}
%\put(5,90){\color{brown}\vector(5,-1){250}}
\qbezier(40,100)(80,0)(120,100)
\put(150,5){\vector(0,1){120}}
\put(5,54){\vector(1,0){300}}

\put(310,50){$\alpha$}
\put(10,120){$H(\alpha)/(\mu_1(\mu_1-1))$}
\put(5,50){\color{red}\line(1,0){300}}
\put(5,58){\color{red}\line(1,0){300}}
%\put(135,20){$B'$}

%\put(148,65){$B$}

%\put(-5,90){$0$}

%\put(70,25){$T'^\rightarrow$}
\put(230,5){Figure 5}
%\put(70,81){$T^\rightarrow$}
%\put(195,80){$T^\leftarrow$}
%\put(210,110){\color{brown}\vector(-1,-1){100}}

%\put(107,110){\color{brown}\vector(-1,-1.08){95}}

%\put(130,30){\textcolor{red}{\rule{4\unitlength}{4\unitlength}}}

%\put(158,57){\textcolor{red}{\rule{4\unitlength}{4\unitlength}}}

\end{picture}

 By \eqref{nv3} and \eqref{nv4}, each $\alpha$ determines $\alpha'$ and $\beta$ up to $O(\kappa^{-1})$ correction, i.e., essentially uniquely.
Since $\alpha=\ell/\kappa$ and $\ell\in \Z$, that gives us $\lessapprox \ell_j\kappa^{-\frac 12}+1$ choices for the number of triples $(\ell,T'^\rightarrow,T^\leftarrow)$ and \eqref{nv6} is proved. From our previous arguments, we know that \eqref{nv6} implies \eqref{nov8}.
Now, substitution of \eqref{nov8} into \eqref{nov7} yields
\[
\|Qf_o\|^2_{2,2}\lessapprox \sum_{j\le N} \frac{(\ell_j\kappa^{-\frac 12}+1)T^{1-2\gamma}(\ell_j+O(R))\kappa}{\ell_j}\|f_j\|^2\stackrel{\eqref{pogh1}}{\lessapprox} \kappa^{-\frac 32}T^{2-2\gamma}\|f_o\|^2 \sum_{j\le N}\ell_j\lesssim  \delta\kappa^{-\frac 12}T^{2-2\gamma}\|f_o\|^2\,.
\]
The proof of Theorem \ref{tg2} is finished.
\qed\medskip

Now, we are ready to prove our main result.

\noindent { \it Proof of Theorem \ref{tg1}.} Given our assumptions,
$U(0,t,k)f=U(0,t,k)f_o+o(1)$ uniformly in $t$ and $k$.
Take $t_j=2\pi jT/N$ where $j\in \{0,\ldots, N\}$ and $N$ is to be chosen later. Write $u:=U(0,t,k)f_o$ and define $\varepsilon_j$ through the formula
$
U(0,t_j,k)f_o=e^{ik\Delta t_j}f_o+\varepsilon_j(k)\,.
$
By the group property, $U(0,t_{j+1},k)=U(t_j,t_{j+1},k)U(0,t_{j},k)$. We can use the Duhamel expansion \eqref{duz1} for the first factor to get
\[
U(0,t_{j+1},k)f_o=e^{ik\Delta t_{j+1}}f_o-i\int_{t_j}^{t_{j+1}}e^{ik\Delta (t_{j+1}-\tau)}V(\cdot,\tau)e^{ik\Delta \tau }f_od\tau+U(t_j,t_{j+1},k)\varepsilon_j(k)+\Delta_{j}\,,
\]
where
\[
\Delta_{j}:=-\int_{t_j}^{t_{j+1}} e^{ik\Delta (t_{j+1}-\tau_1)}V(\cdot,\tau_1)\int_{t_j}^{\tau_1} e^{ik\Delta(\tau_1-\tau_2)}V(\cdot,\tau_2)u(\cdot,\tau_2,k)d\tau_2d\tau_1
\]
and
$
\|\Delta_{j}\|\lesssim (T/N)^2T^{-2\gamma}
$ because $\|u(\cdot,t,k)\|\lesssim  1$ for all $t$.
Hence, 
\[
\epsilon_{j+1}(k)=-i\int_{t_j}^{t_{j+1}}e^{ik\Delta (t_{j+1}-\tau)}V(\cdot,\tau)e^{ik\Delta \tau }f_od\tau+U(t_j,t_{j+1},k)\varepsilon_j(k)+\Delta_j\,.
\]
Consider the first term in the formula above. Following \eqref{ivan1}, we can write
\[
\chi_{t_j<t<t_{j+1}}\cdot V(x,t)=V^{(j)}(x,t)+V_{err}(x,t)\,,
\]
where 
\[
V^{(j)}:=\sum_{(p,q): 2B_{p,q}\subset \R\times [t_j,t_{j+1}]} V_{p,q}(x-2\pi \kappa p,t-2\pi \kappa q)\,.
\]
The term
$V_{err}(x,t)$ is supported in $t$ on $[t_j,t_j+C\kappa]\cup [t_{j+1}-C\kappa,t_{j+1}]$ and $\|V_{err}\|_{L^\infty(\R^2)}\lesssim T^{-\gamma}$. We write
\begin{eqnarray*}
\int_{t_j}^{t_{j+1}}e^{ik\Delta (t_{j+1}-\tau)}V^{(j)}e^{ik\Delta \tau }f_od\tau=\hspace{6cm}\\
\int_{0}^{t_{j+1}}e^{ik\Delta (t_{j+1}-\tau)}V^{(j)}e^{ik\Delta \tau }f_od\tau=e^{-ik\Delta(T-t_{j+1})}\int_{0}^{T}e^{ik\Delta (T-\tau)}V^{(j)}e^{ik\Delta \tau }f_od\tau
\end{eqnarray*}
and apply  Theorem \ref{tg2} to the 
\[
\int_{0}^{T}e^{ik\Delta (T-\tau)}V^{(j)}e^{ik\Delta \tau }f_od\tau
\]
recalling that $V^{(j)}(x,t)=0$ for $t<C_1T$. That gives
\[
\Bigl(\int_I\|\epsilon_{j+1}\|^2dk\Bigr)^{\frac 12}\lessapprox CT^{1-\gamma-\frac{1}{8}}+\left(\int_I\|\epsilon_{j}\|^2dk\right)^{\frac 12}+CT^{2-2\gamma}N^{-2}+CT^{\frac 12-\gamma}\,.
\]
Iterating $N$ times and using $\varepsilon_1=0$, we get
$
\Bigl(\int_I\|\epsilon_{N}\|^2dk\Bigr)^{\frac 12}\lessapprox NT^{1-\gamma-\frac{1}{8}}+T^{2-2\gamma}N^{-1}+NT^{\frac 12-\gamma}\,.
$
Choosing $N=T^{\frac{9}{16}-\frac{\gamma}{2}}$, one has
$
\Bigl(\int_I\|\epsilon_{N}\|^2dk\Bigr)^{\frac 12}\lessapprox T^{\frac{23}{16}-\frac{3\gamma}{2}}
$
and the proof is finished by letting \mbox{$\gamma_0=\frac{23}{24}\,.$}
\qed

\smallskip

\section{Creation of the resonances.}\label{s6}

In that section, we show how the free evolution can be distorted by the potential $V$ of a small uniform norm. In particular, we will see that the Corollary \ref{pl2} does not hold for $\gamma<1$. That explains that the set of resonant parameters in  Theorem \ref{tg1} can indeed be nonempty.\smallskip

\noindent {\bf Definition.} In our perturbation analysis, we will say that the solution $u$ in \eqref{e1} experiences the {\it anomalous dynamics} for a given  $T$-dependent initial data $f$ and $k$ if
$
\lim_{T\to\infty} \|u(x,T,k)-e^{ik\Delta T}f\|_2
$ either does not exist or is not equal to zero.

\begin{lemma}[{\bf Approximation Lemma}] Given real-valued $V(x,t)$ that satisfies $\|V(\cdot,t)\|_{L^\infty(\R)}\lesssim T^{-\gamma}, t\in[0,T]$, we suppose $N$ is chosen such that $N^{-1}T^{2-2\gamma}\lesssim  1$. Then,
\begin{equation}\label{e3}
\|U(0,t_j,k)-(e^{ik\Delta d}+Q_j)\cdot \ldots\cdot (e^{ik\Delta d}+Q_1)
\|\lesssim N^{-1}T^{2-2\gamma}, \,\, j=\{1,\ldots,N\}, 
\end{equation}
where $t_j:=jT/N, d:=T/N$, and 
$
Q_j:=-i\int_{t_{j-1}}^{t_j}e^{ik\Delta(t_j-\tau)}Ve^{ik\Delta(\tau-t_{j-1})}d\tau\,.
$
\end{lemma}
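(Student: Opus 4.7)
The plan is to apply Lemma~\ref{l1} on each small subinterval $[t_{j-1},t_j]$ of length $d=T/N$, and then telescope through the group property of $U$. By Lemma~\ref{l1} applied with $t_1=t_{j-1}$, $t=t_j$, $t_2=t_j$, I obtain
\[
U(t_{j-1},t_j,k)=e^{ik\Delta d}+Q_j+\mathrm{Err}_j,\qquad \|\mathrm{Err}_j\|\lesssim T^{-2\gamma}d^{2}=N^{-2}T^{2-2\gamma},
\]
for every $j\in\{1,\ldots,N\}$.

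Next, denote $A_j:=U(t_{j-1},t_j,k)$ and $B_j:=e^{ik\Delta d}+Q_j$. The propagator is unitary so $\|A_j\|=1$, and the bound above gives $\|B_j\|\le 1+CN^{-2}T^{2-2\gamma}$. By the group property, $U(0,t_j,k)=A_j A_{j-1}\cdots A_1$. I will then use the standard telescoping identity
\[
A_N\cdots A_1-B_N\cdots B_1=\sum_{j=1}^{N}A_N\cdots A_{j+1}(A_j-B_j)B_{j-1}\cdots B_1,
\]
combined with submultiplicativity of the operator norm, to get
\[
\|A_N\cdots A_1-B_N\cdots B_1\|\le \sum_{j=1}^{N}\|\mathrm{Err}_j\|\prod_{i<j}\|B_i\|\le N\cdot CN^{-2}T^{2-2\gamma}\cdot\bigl(1+CN^{-2}T^{2-2\gamma}\bigr)^{N-1}.
\]

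The final step uses the hypothesis $N^{-1}T^{2-2\gamma}\lesssim 1$, which forces $CN^{-2}T^{2-2\gamma}\le C/N$, so the exponential factor $(1+CN^{-2}T^{2-2\gamma})^{N-1}\le e^{C}$ is bounded uniformly. Replacing $N$ by any $j\in\{1,\ldots,N\}$ in the identity only makes the sum shorter, so the same bound controls $\|U(0,t_j,k)-(e^{ik\Delta d}+Q_j)\cdots(e^{ik\Delta d}+Q_1)\|$ by $CN^{-1}T^{2-2\gamma}$, which is \eqref{e3}. The only step that requires any care is verifying that the assumption $N^{-1}T^{2-2\gamma}\lesssim 1$ is precisely what is needed to tame the product $\prod\|B_i\|$; everything else is a direct consequence of Lemma~\ref{l1} and unitarity of $U$.
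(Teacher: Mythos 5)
Your proof is correct and follows essentially the same route as the paper: both apply Lemma~\ref{l1} on each subinterval to control $A_j-B_j$, then telescope through the group property and unitarity of $U$. Your explicit telescoping identity is just the closed form of the paper's recursion $\Delta_j=R_jU(0,t_{j-1},k)+(e^{ik\Delta d}+Q_j)\Delta_{j-1}$, and the uniform bound on $\prod_{i<j}\|B_i\|$ via $N^{-1}T^{2-2\gamma}\lesssim 1$ plays exactly the role of the paper's estimate $(1+\alpha)^j-1\lesssim \alpha j$.
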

\begin{proof}
If
$
\Delta_j:=U(0,t_j,k)-(e^{ik\Delta d}+Q_j)\cdot \ldots\cdot (e^{ik\Delta d}+Q_1), \, R_j:=U(t_{j-1},t_j,k)-(e^{ik\Delta d}+Q_j)
$, then
\begin{eqnarray*}
U(0,t_j,k)=(e^{ik\Delta d}+Q_j+R_j)U(0,t_{j-1},k)\hspace{5cm}\\=(e^{ik\Delta d}+Q_j+R_j)((e^{ik\Delta d}+Q_{j-1})\cdot \ldots\cdot (e^{ik\Delta d}+Q_1)+\Delta_{j-1})\\
=(e^{ik\Delta d}+Q_{j})\cdot \ldots\cdot (e^{ik\Delta d}+Q_1)+R_jU(0,t_{j-1},k)+(e^{ik\Delta d}+Q_{j})\Delta_{j-1}\,.
\end{eqnarray*}
Hence,
$
\Delta_j=R_jU(0,t_{j-1},k)+(U(t_{j-1},t_j,k)-R_j)\Delta_{j-1}
$
and we use Lemma~\ref{l1} to get a bound
$
\|\Delta_j\|\le (1+\alpha)\|\Delta_{j-1}\|+\alpha, \, \|\Delta_1\|\le \alpha\,,
$
where $\alpha:=CT^{-2\gamma}d^2$. 
 Then, by induction,
$
\|\Delta_j\|\le (1+\alpha)^j-1\le e^{\alpha j}-1\lesssim \alpha j
$ and the last estimate holds provided that $\alpha j\lesssim  1$. Taking $j=N$, we get the statement of our lemma.
\end{proof}

 Taking $N=T^{2-2\gamma}\mu(T), \lim_{T\to\infty}\mu(T)=+\infty$ in this lemma, we get a good approximation for the dynamics by a product of $N$ relatively simple factors. Clearly, the same result holds if we replace $x\in \R$ by $x\in \R\backslash (CT)\Z$. \smallskip

(A) {\it Creation of anomalous dynamics using a bound state. } Consider any smooth non-positive  function $q(s)$ supported on $[-1,1]$ which is not equal to zero identically. The standard variational principle yields the existence of a positive bound state $\varphi(s)$ that solves $-\varphi''+\lambda q\varphi=E\varphi, E<0$ when a positive coupling constant $\lambda$ is large enough. The number $E$ is the smallest eigenvalue and 
$\varphi$ is an exponentially decaying smooth function. We can normalize it as $\|\varphi\|_{L^2(\R)}=1$. Hence, one gets
\[
i y_\tau=-\Delta y+\lambda qy, \quad y(s,\tau)=\varphi(s)e^{-iE\tau},\quad  y(s,0)=\varphi(s)\,.
\]
Given $T$ and $\alpha_T$, we can rescale the variables $u(x,t):=\alpha_T^{\frac 12} y(\alpha_T x,\alpha_T^2 t)$. Then, 
\[
iu_t=-\Delta u+\lambda \alpha_T^2q(\alpha_T x)u, \quad \|u(x,0)\|_{L^2(\R)}=1.
\]
The initial value $u(x,0)$ is now supported around the origin at scale $\alpha_T^{-1}$, and its Fourier transform is supported around the origin on scale $ \alpha_T$. The potential $V_2=\lambda\alpha_T^2q(\alpha_T x)$ satisfies $|V_2|\lesssim  \lambda \alpha_T^2$. Hence, taking $\alpha_T=T^{-\frac{\gamma}{2}}$, we satisfy $|V|\lesssim \lambda T^{-\gamma}$. Nonetheless, the function $u(x,0)$ of scale $T^{\gamma/2}\ll T^{1/2}$ evolves into $u(x,T)$ which is supported around zero and has the same scale as $u(x,0)$. On the other hand, $e^{i\Delta T}u(x,0)$ is supported around the origin and has the scale $T^{1-\frac{\gamma}{2}}\gg T^{\frac{\gamma}{2}}$ if $\gamma<1$. This is one example of anomalous transfer when the potential, small in the uniform norm, traps the wave and prevents its propagation.\smallskip

(B) {\it Anomalous dynamics: construction of a  resonance.} We now focus on another problem with time-independent potential:
\begin{equation}\label{resg1}
iu_t=-\Delta u+T^{-\gamma}\cos (2x)  u, \quad u(x,0)=T^{-\frac 12}(a_0e^{ix}+b_0e^{-ix}), \quad x\in \R/(T\Z), \quad T\in 2\pi \N\,,
\end{equation}
$\gamma\in (\frac 12,1)$ and $|a_0|^2+|b_0|^2=1$. Consider operators $Q_j$ from Approximation Lemma. They do not depend on $j$ and, if we denote $Q:=Q_j$, then
\[
T^{-\frac 12}Q(a_0e^{ix}+b_0e^{-ix})=-iT^{-\gamma-\frac 12}\int_{0}^{d} e^{i\Delta (d-\tau)
}\cos (2x) \cdot e^{i\Delta \tau}(a_0e^{ix}+b_0e^{-ix}) d\tau\,.
\]
For arbitrary $a$ and $b$, one can write
\begin{eqnarray*}
-T^{-\frac 12}Q(ae^{ix}+be^{-ix})=iT^{-\gamma-\frac 12}\int_{0}^{d} e^{i\Delta (d-\tau)
}\cos (2x) (ae^{i(x-\tau)}+be^{-i(x+\tau)})d\tau=\\
i\frac{a}{2}T^{-\gamma-\frac 12} \int_0^d e^{-i\tau}e^{i\Delta(d-\tau)}e^{3ix}d\tau+i\frac{b}{2}T^{-\gamma-\frac 12} \int_0^d e^{-i\tau}e^{i\Delta(d-\tau)}e^{-3ix}d\tau+\\
i\frac{a}{2}T^{-\gamma-\frac 12} \int_0^d e^{-i\tau}e^{i\Delta(d-\tau)}e^{-ix}d\tau+i\frac{b}{2}T^{-\gamma-\frac 12} \int_0^d e^{-i\tau}e^{i\Delta(d-\tau)}e^{ix}d\tau\,.
\end{eqnarray*}
We can arrange $d$ to  make sure that $d=T/N\in 2\pi \N$ and  $N\sim  T^{2-2\gamma}$. Then, 
\[
T^{-\frac 12}Q(ae^{ix}+be^{-ix})=\left(-\frac{ib}{2}T^{-\gamma-\frac 12}d\right)e^{ix}+\left(-\frac{ia}{2}T^{-\gamma-\frac 12}d\right)e^{-ix}
\]
and the product in  \eqref{e3} takes the form
\[
\left((e^{i\Delta d}+Q_j)\cdot \ldots\cdot (e^{i\Delta d}+Q_1)\right)(a_0e^{ix}+b_0e^{-ix})=a_je^{ix}+b_je^{-ix}\,,
\]
where
\[
\left(\begin{matrix}
a_{j}\\
b_{j}
\end{matrix}\right)=\left(\begin{matrix}
1& \lambda\\
\lambda&1
\end{matrix}\right)^j\left(\begin{matrix}
a_{0}\\
b_{0}
\end{matrix}\right), \quad \lambda= -\frac{i dT^{-\gamma}}{2}\,.
\]
From the Approximation Lemma, we get
\[
\|u(x,d j)-T^{-\frac 12}(a_je^{ix}+b_je^{-ix})\|_2=o(1), \quad T\to \infty
\]
for $j=o(N)$. The eigenvalues of the matrix $\left(\begin{smallmatrix} 1&\lambda\\\lambda&1\end{smallmatrix}\right)$ are $z_{\pm}=1\mp i|\lambda|$ so 
\[
\left(\begin{matrix}
a_{j}\\
b_{j}
\end{matrix}\right)=z_+^j\left(\begin{matrix}
1\\
1
\end{matrix}\right)\frac{a_0+b_0}{2}+z_-^j\left(\begin{matrix}
1\\
-1
\end{matrix}\right)\frac{a_0-b_0}{2}\,.
\]
Since $N\sim  T^{2(1-\gamma)}$,
$
z_+^j=e^{\log(1-i|\lambda|)j}=e^{-i(|\lambda|j+O(|\lambda|^2j))}=e^{-i|\lambda|j}(1+O(|\lambda|^2j))
$
if $j=o(N)$. A similar calculation can be done for $z_-^j$. Hence, already for $j\gg NT^{\gamma-1}$, the solution $u$ with initial data $e^{ix}T^{-\frac 12}$ will carry nontrivial $L^2$ norm on frequency band  $\{e^{i\alpha x}, |\alpha-1|>0.1\}$ which indicates the anomalous dynamics. That is achieved by the creation of a resonance that changes the direction of the wave. \smallskip

{\noindent \bf Remark.} We notice that introduction of the parameter $k>0$ in the form
\[
iu_t=-k\Delta u+T^{-\gamma}\cos (2x) u,\quad  u(x,0,k)=T^{-\frac 12}e^{ix}
\]
only rescales the time and potential, and  the resonance occurs for all positive $k$.  That also indicates that $\delta$ in \eqref{asd2} must be taken sufficiently small for \eqref{lop2} to hold. \smallskip %However, one can check directly that no resonance exists  in the original problem
%\[
%iu_t=-u_{xx}+T^{-\gamma}\cos (2x)\cdot  u, \quad u(x,0)=T^{-\frac 12}e^{ik x}
%\]
%for most of  $k\in T^{-1}\Z$. \smallskip

{\noindent \bf Remark.} In the evolution $iu_t=-\Delta u+V(x,t)u$, the energy
\[
E(t)=\int \left({|u_x|^2}+V|u|^2\right) dx
\]
satisfies $E'(t)=\int V_t(x,t)|u(x,t)|^2dx$ provided that $V$ is smooth in $t$. In particular, $E$ is a conserved quantity for time-independent $V$.
 For initial data $f=e^{ix}T^{-\frac 12}$ and $V=T^{-\gamma}\cos(2x)$, we get $E=1$. That, however, does not contradict the existence of a resonance since other functions also give rise to the same energy, e.g., $\widetilde f=e^{-ix}T^{-\frac 12}$. Hence, in our example above, we realize the   transfer of $L^2$--norm along the equi-energetic set.\smallskip

The resonance phenomenon described above in \eqref{resg1} where $x\in \R/ (T\Z)$ also takes place if we consider the same equation on $\R$ and the initial data is replaced by $e^{ix}T^{-\frac 12} \mu(x/T)$ where $\mu$ is a compactly supported smooth bump  satisfying $\mu(x)=1$ for $|x|<1$. Next, we introduce parameter $k$ and  consider the problem 
\begin{equation}\label{p1}
iy_t=-k\Delta y+T^{-\gamma}\cos (2x)  y, \quad y(x,0,k)=T^{-\frac 12}e^{i\frac{1}{2k} x}\mu(x/T), \quad x\in \R\,,
\end{equation}
which exhibits the resonance for $k=\frac 12$ as we just established.
We recast it
using the modulation scaling described in the Appendix. In particular, $\psi(x,t,k):=e^{-i(\frac{t}{4k}+\frac{x}{2k})}y(x+t,t,k)$ solves
\begin{equation}\label{p2}
i\psi_t=-k\Delta \psi+T^{-\gamma}\cos (2x+2t) \psi, \quad \psi(x,0,k)=T^{-\frac 12}\mu(x/T), \quad x\in \R\,.
\end{equation}
The solution to problem \eqref{p2} satisfies \eqref{lop2} and so it is non-resonant for generic $k$. Nevertheless, for $k=\frac 12$, it is resonant, Indeed, it has no local oscillation when $t=0$ but starts to oscillate like $e^{-2ix}$ locally when $t\gg T^{\gamma}$ giving a boost to the Sobolev norms.  In particular,  the Corollary \ref{pl2} does not hold for $\gamma<1$ and the set $Res$ of resonant parameters $k$ in  Theorem~\ref{tg1} can indeed be nonempty.\bigskip

(C) {\it The lower bound for the norm of the one-collision operator. } For $x\in \R\backslash T \Z, T=2\pi N, N\in 2\mathbb{N}$, consider a one-collision operator $Q$ defined in \eqref{coll} and write it in the form:
\begin{equation}\label{lsf1}
Q=e^{ik\Delta (2\pi N)}Q_*, \quad Q_*:=\int_0^{2\pi N} e^{-ik\Delta t}V(\cdot,t)e^{ik\Delta t}dt\,.
\end{equation}
Now, we study the operator norm $\|Q(k)\|$ as a function in $k\in I$. 

\begin{lemma}Let $I\subset \R^+$ be any interval and $\gamma\in (0,1)$. There is $V$ such that $\|V\|_{L^\infty(\R\backslash T\Z\times [0,T])}\le CT^{-\gamma}$ such that 
\[
\int_I \|Q(k)\|^2dk\gtrsim T^{1-\gamma}\,.
\]
\end{lemma}
\begin{proof}
Since $e^{ik\Delta (2\pi N)}$ is unitary, we only need to focus on $\|Q_*(k)\|$ from \eqref{lsf1}.
Write the matrix representation of $Q_*$ in the Fourier orthonormal basis $\{T^{-\frac 12}e^{ixn/N}, n\in \Z\}$:

\[
q^\ast_{n,m}(k)=\int_0^{2\pi N}\widehat V_{n-m}(t)e^{ik(n^2-m^2)tN^{-2}}dt\,,
\]
where $\widehat V$ denotes the Fourier transform of $V$ in the $x$-variable. Take $V=N^{-\gamma}e^{i(x-t)}$, which yields
$
n-m=N
$
and
\[
|q^\ast_{n,n-N}(k)|\sim N^{-\gamma}\left|\frac{1-e^{2\pi i N(-1+k(1+2mN^{-1}))}}{-1+k(1+2mN^{-1})}\right|\,.
\]
Since $\|Q^*\|\ge \max_{n,m}|q^*_{n,m}|> CN^{1-\gamma}$ for $k\in I'\subset I$, $|I'|>c|I|$ with some positive $c$, we get our statement. 
\end{proof}

\noindent {\bf Remark.} Notice that potential $V$ in the proof above oscillates and its Fourier support is separated from the origin. The norm $\|Q(k)\|$ in this example is large when $2mN^{-1}+1-k^{-1}$ is close to zero. Hence, if $I$ contains $k=1$, even restricting $m$ to the range $|m|\le \delta N, \delta\ll 1$ does not prohibit the norm to be of size $N^{1-\gamma}$. This construction can be easily adapted to $x\in \R$ by using our wave packet decomposition.

\smallskip

\section{Appendix.}

\noindent {\it   Basic properties of 1d Schr\"odinger evolution.} The following two scaling properties of Schr\"odinger evolution  can be checked by direct inspection.\smallskip

{\it (a) Modulation.} If $u(x,t,k)$ solves 
\[
 iu_t=-k\Delta u+q(x,t)u, \quad u(x,0,k)=f(x)\,,
\] 
 then $\psi(x,t,k):=e^{-i\frac{\beta^2}{4k}t-i\frac{\beta}{2k}x}u(x+\beta t,t,k)$ solves
 \begin{equation}\label{isad1}
 i\psi_t=-k\Delta \psi+q(x+\beta t,t)\psi, \quad \psi(x,0,k)=e^{-i\frac{\beta}{2k}x}f(x).
 \end{equation}\smallskip

{\it (b) Scaling of time and space variables.} If $u(x,t)$ solves 
\[
 iu_t=-\Delta u+q(x,t)u, \quad u(x,0)=f(x)\,,
\] 
 then $\phi(x,t):= u(\beta x,\sigma t)$ solves
 \[
 i\phi_t=-\sigma\beta^{-2}\Delta \phi+\sigma q(\beta x,\sigma t)\phi, \quad \phi(x,0)=f(\beta x)
 \]
for all $\beta\neq 0$ and $\sigma>0$.\smallskip

{\it (c) Evolution of a bump function.} Suppose $h\in \mathbb{S}(\R)$ and let 
\begin{equation}
\mathcal{U}(x,t)= e^{it\Delta}h\,.\label{gene1}
\end{equation} Then, 
$
\langle x\rangle^\beta|(\partial_t^\nu\partial_x^\alpha \mathcal{U})(x,t)|<C_{\alpha,\beta,\nu},\,\, \alpha,\beta,\nu\in \Z^+
$
uniformly in $t\in [-t_0,t_0]$ with an arbitrary fixed positive $t_0$. That is immediate from the Fourier representation of Schr\"odinger evolution.\smallskip

{\it (d) Evolution of a scaled bump.} 
Suppose $h\in \mathbb{S}(\R)$ and $\kappa=\sqrt T\ge 1$. Then, 
\begin{equation}\label{ffa1}
e^{it\Delta}h(x/\kappa)=\mathcal{U}(x/\kappa,t/\kappa^2)
\end{equation}
and hence
\[
\partial_t^\nu\partial_x^\alpha \Bigl(e^{it\Delta}h(x/\kappa)\Bigr)=\kappa^{-\alpha-2\nu}(\partial_t^\nu\partial_x^\alpha \mathcal{U})(x/\kappa,t/\kappa^2),\quad \alpha,\nu\in \Z^+\,.
\]
That follows directly from the previous observation. For $\alpha=\nu=0$ and $\kappa\to\infty$, the function in the right-hand side is essentially supported in the neighborhood of the tube $(x,t)\in [-\kappa,\kappa]\times [-\kappa^2,\kappa^2]$, when $t$ is restricted to $[-T,T]$ but that localization is not exact as function $\mathcal{U}$ is not compactly supported in $x$. To have a sharper form of localization, we apply the following decomposition. Suppose $\phi\in C_c^\infty(\R), \, \supp \, \phi\in [-2\pi,2\pi]$ and 
$
1=\sum_{\lambda\in \Z}\phi(x-2\pi\lambda)\,.
$
Then, we can write
\begin{equation}\label{trans1}
\mathcal{U}(x,t)=\sum_{\lambda\in \Z} \mathcal{U}_\lambda(x,t), \quad \mathcal{U}_\lambda(x,t):=\mathcal{U}(x,t)\phi(x-2\pi\lambda)
\end{equation}
and
\begin{equation}\label{pio1}
\langle \lambda\rangle^\beta|\partial^\nu_t\partial^\alpha_x \mathcal{U}_\lambda(x,t)|\le C_{\alpha,\nu,\beta}, \quad \alpha,\beta,\nu\in \Z^+
\end{equation}
if $t\in [-1,1]$.
Hence, we can rewrite 
\begin{equation}
e^{it\Delta}\phi(x/\kappa)=\sum_{\lambda\in \Z} \mathcal{U}_\lambda(x/\kappa,t/\kappa^2)\,,
\end{equation}
where
each $\mathcal{U}_\lambda(x/\kappa,t/\kappa^2)$ is supported on the tube $[-2\pi\kappa +2\pi \lambda\kappa,2\pi\kappa+2\pi\lambda\kappa]\times [-T,T]$ when $t\in [-T,T]$. Moreover, \eqref{pio1} indicates that the contribution from $\mathcal{U}_\lambda(x/\kappa,t/\kappa^2)$ is negligible for large $\lambda$.
\bigskip

\bibliographystyle{plain}

\bibliography{Summer_bib}

\end{document}